\DeclareMathOperator{\graph}{graph}
\DeclareMathOperator{\ind}{ind}
\newcommand{\Rn}{{\mathbb R}^n}
\newcommand{\Rm}{{\mathbb R}^m}
\newcommand{\Z}{{\mathbb Z}}
\newcommand{\R}{{\mathbb R}}
\newcommand{\cF}{{\mathcal F}}
\newcommand{\cM}{{\mathcal M}}
\newcommand{\cP}{{\mathcal P}}
\newcommand{\cS}{{\mathcal S}}
\newcommand{\cV}{{\mathcal V}}
\newcommand{\ep}{{\epsilon}}
\newtheorem{theorem}{Theorem}
\newtheorem{prop}{Proposition}
\newtheorem*{claim}{Claim}
\newtheorem{cor}{Corollary}
\newtheorem{lemma}{Lemma}
\theoremstyle{definition}
\newtheorem{definition}{Definition}[section]
\newtheorem{example}{Example}
\theoremstyle{remark}
\newtheorem*{remark}{Remark}
\begin{document}

\author{EFTHIMIOS KAPPOS\\ 
School of General Sciences,\footnote{Submitted for publication, 2009, under revision} \\ Department of Mathematics, \\
Faculty of Engineering, \\ Aristotle University of Thessaloniki, Greece.}

\title{\bf \huge Topological Necessary Conditions for Control Dynamics}

\date{}

\maketitle

\section{Introduction}

Suppose a smooth feedback control has been found so that the controlled dynamics have an asymptotically stable attractor at some point x in state space. Then local Lyapunov functions exist for the dynamics; these functions must all have a unique minimum at the point x, but are otherwise arbitrary. On a compact level set of any such Lyapunov function, the controlled dynamics point inwards, in other words in the direction of the negative of the gradient of the Lyapunov function. As maps to the unit sphere, the two vector fields thus have the same degree. But the degree of the negative gradient vector field is known —it is exactly $(−1)^n \neq 0$. Hence the map of the controlled dynamics, restricted to a level set, to the sphere must be onto. This means roughly that all control directions must be available near a point that is to be stabilized by control. In control theory, this is known as the Brockett condition, but such simple de- gree results were widely known before (Krasnosel’skii’s name is mentioned in conjunction with that of Brockett.) 
Such degree-theoretic arguments have been used for some time in topology and were eventually adopted by control theorists to derive a number of related necessary conditions for design of controlled dynamics using continuous feed- back. 

In this paper, we give an account of this theory that has two distinguishing features: first, there is really no reason to limit ourselves to local results: a global theory is straightforward to obtain. Secondly, we point out that such necessary conditions are in some funda- mental sense of limited value; this is because they involve maps from manifolds to spheres of the same dimension. By the Hopf theory, homotopy equivalence classes of such maps are completely classified by a single integer (traditionally called the degree, but better interpreted in terms of homology groups.) 

Several comments are in order: generalizations of the Brockett 
necessary condition have been obtained by Coron and others. These 
suffer to some extent by the problem mentioned above, but have helped 
clarify the fact that \emph{surjectivity} is not enough: the vector field 
cannot `twist' too much either (examples use the degree $k$ maps $\dot 
z = z^{k}$ as part of a control system decomposition.)
Moreover, extensions to the case of \emph{dynamic feedback} have been derived.

Perhaps more importantly, many systems cannot be stabilized using 
continuous feedback, but can be easily stabilized with {\bf discontinuous} 
feedback.
The discontinuity is rather mild: it is usually limited to a `thin' 
subset of state space.
Recent work of Sontag, Clarke, Subbotin and others has led to a theory 
of discontinuous feedback controls and a methodology for obtaining 
nonsmooth Lyapunov functions.
Now it is possible to interpret this theory in a {\emph{hamiltonian} 
context: The discontinuities correspond to jumps between locally 
nonsingularly projected lagrangian levels.
This way of examining possibly discontinuous feedback controls is 
conceptually easier to understand and is in step with the philosophy 
of the book~\cite{contbk} which is to give, as far as is possible, \emph{geometric} 
accounts of \emph{analytical} points.

We begin by giving an outline of the algebraic 
topological machinery needed for a discussion of necessary conditions.
Here, we depart from the practice of delegating mathematical background to an appendix, because we believe that this theory is quite accessible and elegant.

A collection of \emph{global necessary conditions} is then given, 
directly based on the topological results.
Essentially, it is argued that if certain dynamics are achieved, 
then {\bf index-theoretic} conditions can be deduced by counting the 
equilibrium points and their stability (Euler-type arguments) and 
{\emph{degree-theoretic} results are obtained by the Hopf theorems 
using the Gauss maps of the dynamics and the gradient vector field of 
Lyapunov functions. 

Finally, let us point out that this paper by no means exhausts the 
theory of necessary conditions in control design.
Much more crucial limitations on achievable control dynamics arise 
from the theory of {\bf feedback invariant} objects, a theory that 
is developed in the book~\cite{contbk}.

\section{Some Background and Methods from Algebraic Topology}

A thumb-nail sketch of a number of concepts and methods from algebraic 
topology will now be given.
There is no effort to be rigorous, but we do hope to explain enough 
about the computational methods so that a non-expert reader can use 
them in concrete situations.

\emph{Algebraic topology} is based on a simple principle: attach algebraic 
objects to topological spaces that are invariants of the homotopy 
type of the space.
Thus, more precisely we assign algebraic objects to 
homotopy equivalence classes of spaces and this assignment is 
`functorial' in the sense that maps of spaces induce homomorphisms of 
the algebraic objects.
This already gives useful tests: since homotopy equivalent spaces 
have isomorphic algebraic objects, two spaces are definitely not 
homotopy equivalent if their algebraic objects are not isomorphic.
The bulk of algebraic topology consists of deriving finer and finer 
such objects so as to be able to better distinguish spaces and in 
making clever use of its basic constructions to aid the analysis of 
global aspects of other subjects (such as complex analysis, pdes, 
geometry etc.) 

\paragraph{Singular homology}

The easiest algebraic object we can attach to a space is the graded 
abelian group $H_{*} (X)$ called the {\bf singular homology group} 
of $X$ (with integer coefficients.)
One can get quite far with only a vague understanding of what the 
singular homology measures and the reason is that powerful and 
effective methods for the computation of $H_{*} (X)$ exist.
We outline the Mayer-Vietoris sequence and explain the concept of a 
long exact sequence of a pair and its relation to excision.

A graded abelian group $G = \oplus G_{k}$ is a direct sum of groups 
$G_{k}$, $k \in \Z _{+}$, such that the group addition is 
`component-wise', in other words we add elements belonging to the same 
graded component together.
The notation
\[ g = \ldots + g^{0} + g^{1} + \ldots \]
for an element of $G$, where $g^{k} \in G_{k}$ is therefore unambiguous.

For a topological space, the $k$th homology group $H_{k} (X; \Z )$ 
measures in some sense the {\emph{`holes'} of $X$ that are like $k$ 
spheres $S^{k}$ (think of a boundary-less space, like a sphere, that 
does not actually bound anything itself in $X$.)
The $0$th group $H_{0} (X)$ is equal to $\Z$ if $X$ is path connected.
(Recall that a $0$-sphere is the boundary of an interval, i.e. the 
union of two points.)
There is a way of defining \emph{reduced homology groups} $\tilde 
H_{k} (X)$ so that $\tilde H_{0} (X) = 0$ for a connected space and 
so that all higher dimensional groups coincide with the non-reduced 
ones.

Let us give some examples (we omit the zeroth homology group.)
The singular homology of the circle $S^{1}$ is $H_{1} ( S^{1} ) \simeq \Z$ 
and zero for $k > 1$.
Since $\pi _{1} ( S^{1} ) = \Z$ also, the homology group contains the 
same information as the fundamental group of the circle.
Note the difference in interpretation, though:
In the former case (for $\pi _{1}$), we are thinking of maps from the 
circle to itself, 
classifed by the number of net encirclements
In the latter, we are 
thinking of a fixed circle ---coinciding in this case with the whole 
space $S^{1}$--- as the generator of a free abelian group; in this 
sense, we can write
\[ H_{1} ( S^{1} ) = \Z [ S^{1} ] \simeq \Z . \]

For the sphere $S^{m}$ of dimension $m >1$, $H_{m} ( S^{m} ) \simeq 
\Z$ is the only nonzero homology group in positive dimension.
Since we also have that the $m$th homotopy group of an $m$ sphere 
is $\Z$, we have not yet obtained anything new, compared with homotopy 
theory.
This is a little misleading: homotopy is both subtler than homology 
and far more difficult to compute: we do not, even today, have a 
complete list of the homotopy groups of spheres.
Moreover, $\pi _{m+k} ( S^{m} ) $ may very well be nonzero for some 
$k >0$, while $H_{m+k} ( S^{m} ) = 0$ always.

The `coincidence' is really due to a nontrivial theorem, the {\bf Hurewicz 
isomorphism} that states that homotopy and homology groups are 
isomorphic at the first level when one, and hence both, are 
nontrivial (the abelianization of the possibly nonabelian fundamental 
group is to be considered, if this happens at the first level.)

A quick check that homology theory does indeed give something new is 
to compute the homology of the torus $T^{2}$.
We have that $H_{2} ( T^{2} ) \simeq \Z$ even though $\pi _{2} ( 
T^{2} ) =0$! (it may be profitable to spend a minute or two pondering 
the difference.)

\paragraph{The Mayer-Vietoris sequence}

Supose a space $X$ is the union of two opne subsets, $X= A \cup B$, 
with $A,B$ opne and $A \cap B \ne \emptyset$.
Then there is a \emph{long exact sequence} involving the homology groups of 
the three spaces
\begin{equation}
\ldots \to H_{k} ( A \cap B ) \to H_{k} (A) \oplus H_{k} (B) \to 
H_{k} (X) \to H_{k-1} ( A \cap B ) \to \ldots           
        \label{leshom}
\end{equation}
This gives a surprisingly powerful tool for the computation of homology.
Even without knowing what the maps at each stage are (for which we 
refer the reader to standard accounts such as~\cite{gr:harp}) the 
exactness allows the computation in concrete cases, such as that of 
the spheres.
For this, decompose an $m$-sphere into two slightly overlapping 
hemispheres $A$ and $B$ so that their intersection $A \cap B$ is deformable to a 
sphere of dimension $m-1$.
We can start an induction with dimension $m=1$ and use the 
Mayer-Vietoris sequence to obtain
\begin{equation}
\ldots \to H_{k} ( S^{m-1} ) \to H_{k} (A) \oplus H_{k} (B) \to 
H_{k} ( S^{m} ) \to H_{k-1} ( S^{m-1} ) \to \ldots              
        \label{lesSm}
\end{equation}
yielding, for $k=m$, and since disks have no homology
\begin{equation}
\ldots \to 0 \to 0 \oplus 0 \to 
H_{m} ( S^{m} ) \to \Z \to 0 \to \ldots .
\end{equation}
We conclude that $H_{m} ( S^{m} ) \simeq \Z$, since any exact sequence 
of the form
\[ 0 \to C \to D \to 0 \]
implies that the middle map is one-to-one and onto, i.e. an isomorphism.

\paragraph{Long exact sequence of a pair and excision}

A second very useful method for the computation of homology comes from 
considering pairs $(X,A)$, where $A \subset X$ is a subspace.
One gets the long exact sequence for the pair
\begin{equation}
\ldots \to H_{k} ( A ) \to H_{k} (X) \to 
H_{k} ( X,A ) \to H_{k-1} ( A ) \to \ldots              
        \label{lesPair}
\end{equation}
where the groups $H_{k} (X,A)$ are the {\bf relative homology groups}.
Without giving the exact definition, found in the standard texts, let 
us mention that in many important cases, these relative groups are 
isomorphic to the homology groups of the quotient space $X/A$ (see 
chapter 3 for the definition.)
The long exact sequence for a pair is thus extremely useful for the 
computation of the homological Conley index.

As an example, let us show that the quotient $D^{n} / S^{n-1}$ of a 
closed ball by its bounding sphere has the homology of the 
$n$-sphere $S^{n}$.
The long exact sequence of the pair $( D^{n} , S^{n-1} )$ is
\begin{equation}
\ldots \to H_{k} ( D^{n} ) \to 
H_{k} ( D^{n} , S^{n-1} ) \to H_{k-1} ( S^{n-1} ) \to H_{k-1} ( 
D^{n} ) \to \ldots
\end{equation}
and so, at $k=n$, we get
\begin{equation}
\ldots \to 0 \to 
H_{k} ( D^{n} , S^{n-1} ) \to \Z  \to 0 \to \ldots
\end{equation}
hence $ H_{k} ( D^{n} , S^{n-1} ) \simeq H_{k} ( D^{n} / S^{n-1} ) 
\simeq \Z$.
Similarly, one finds that, for $k \ne n$ (and nonzero), $H_{k} ( 
D^{n} / S^{n-1} ) =0$.

\paragraph{Maps and homomorphisms}

Given a continuous map $f : X \to Y$, there is an induced map in 
homology, which we shall denote by $H_{*} (f)$ or $f_{*}$
\[ H_{*} (f) : H_{*} (X) \to H_{*} (Y) \]
and one checks that homology is a {\bf covariant functor} from the category 
{\bf Top} $= ( \text{ 
Top}, C^{0} )$ of topological spaces and continuous maps to the 
category {\bf Ab}$= ( \text{Ab}, \text{Hom} )$ of abelian groups and 
homomorphisms between them.
Since $H_{*} (X)$ is graded, the above homomorphism is understood to 
mean that it consists of homomorphisms at each level of homology:
\[ H_{k} (f) : H_{k} (X) \to H_{k} (Y) \]
for all $k$.

In fact, it would be more precise to say that the functor goes from 
the category {\bf hTop} of homotopy equivalence classes of spaces and homotopic 
maps to the category {\bf Ab}, since
\begin{prop}
If the maps $f$ and $g$ are homotopic, then the maps in homology 
coincide: $f_{*} = g_{*}$.
\end{prop}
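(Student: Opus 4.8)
The plan is to establish the standard homotopy-invariance of singular homology using the algebraic machinery of chain homotopies, which is the natural tool here since the functor $H_*$ is defined as the homology of the singular chain complex. First I would recall that a continuous map $f : X \to Y$ induces a chain map $f_\# : C_*(X) \to C_*(Y)$ on singular chains, sending a singular simplex $\sigma : \Delta^k \to X$ to $f \circ \sigma : \Delta^k \to Y$, and that $H_*(f) = f_*$ is the map this induces on homology. So it suffices to show that if $f$ and $g$ are homotopic, then $f_\#$ and $g_\#$ are \emph{chain homotopic}, i.e.\ there is a sequence of homomorphisms $P : C_k(X) \to C_{k+1}(Y)$ with $\partial P + P \partial = g_\# - f_\#$; a routine diagram-chase then shows any two chain-homotopic chain maps agree on homology, since on a cycle $z$ we get $g_\#(z) - f_\#(z) = \partial P(z)$, a boundary.

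The heart of the argument is the construction of the chain homotopy $P$, the so-called \emph{prism operator}. Let $H : X \times [0,1] \to Y$ be a homotopy with $H_0 = f$, $H_1 = g$. The idea is geometric: given a singular $k$-simplex $\sigma : \Delta^k \to X$, form the composite $H \circ (\sigma \times \mathrm{id}) : \Delta^k \times [0,1] \to Y$, and then triangulate the prism $\Delta^k \times [0,1]$ into $(k+1)$-simplices in the standard way (using the vertices $v_0,\dots,v_k$ of $\Delta^k \times \{0\}$ and $w_0,\dots,w_k$ of $\Delta^k \times \{1\}$, with the $i$th prism simplex spanned by $v_0,\dots,v_i,w_i,\dots,w_k$). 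Defining $P(\sigma)$ as the signed sum $\sum_i (-1)^i H\circ(\sigma\times\mathrm{id})|_{[v_0,\dots,v_i,w_i,\dots,w_k]}$ and extending linearly gives the operator; one then verifies the identity $\partial P + P \partial = g_\# - f_\#$ by a careful bookkeeping of faces, where the top and bottom faces of the prism contribute $g_\#(\sigma)$ and $-f_\#(\sigma)$ and all the interior faces cancel in pairs.

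The main obstacle is precisely this last sign-bookkeeping computation: verifying $\partial P + P\partial = g_\# - f_\#$ requires splitting $\partial$ of each prism simplex into the faces omitting a $v$ and those omitting a $w$, matching them against the terms of $P\partial\sigma$, and checking that everything cancels except the two end faces. This is entirely routine but notationally heavy, so in keeping with the expository tone of this section I would state the prism decomposition, write down the formula for $P$, assert the key identity, and refer the reader to a standard text such as~\cite{gr:harp} for the detailed verification of the cancellation. Finally I would close the loop: given $[z] \in H_k(X)$ with $\partial z = 0$, compute $(g_\# - f_\#)(z) = \partial P(z) + P(\partial z) = \partial P(z)$, so $g_\#(z)$ and $f_\#(z)$ differ by a boundary and hence $f_*[z] = g_*[z]$, which is the claim.
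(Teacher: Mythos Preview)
Your proposal is correct: this is the standard prism-operator argument for homotopy invariance of singular homology, and the outline you give (construct the chain homotopy $P$ from a triangulation of $\Delta^k \times [0,1]$, verify $\partial P + P\partial = g_\# - f_\#$, conclude on cycles) is exactly how the result is established in textbooks.

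However, the paper does not prove this proposition at all. It appears in the section explicitly described as a ``thumb-nail sketch'' of algebraic-topological background with ``no effort to be rigorous,'' and the proposition is simply stated as a known fact about the homology functor, alongside the companion statement that homotopy equivalences induce isomorphisms. So there is no route in the paper to compare yours against: you have supplied a genuine proof where the paper is content to quote the result. What your approach buys is self-containedness; what the paper's choice buys is brevity appropriate to an expository survey whose focus lies elsewhere.
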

and
\begin{prop}
If two spaces $X$ and $Y$ are homotopy equivalent and the map $f$ has 
a homotopy inverse, then $f_{*}$ is an isomorphism and the homology 
groups of $X$ and $Y$, $H_{*} (X)$ and $H_{*} (Y)$ are isomorphic.
\end{prop}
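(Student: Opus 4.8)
The plan is to obtain this as a purely formal consequence of the preceding Proposition together with the functoriality of singular homology already asserted above. First I would unwind the hypothesis: to say that $X$ and $Y$ are homotopy equivalent via $f : X \to Y$ means precisely that $f$ admits a \emph{homotopy inverse}, i.e.\ a map $g : Y \to X$ with $g \circ f \simeq \mathrm{id}_{X}$ and $f \circ g \simeq \mathrm{id}_{Y}$, where $\simeq$ denotes homotopy of maps. So the whole input is these two homotopies.

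Next I would invoke the two functor axioms that come packaged with the statement that $H_{*}$ is a covariant functor from {\bf Top} to {\bf Ab}: it respects composition, $(g \circ f)_{*} = g_{*} \circ f_{*}$, and it sends identity maps to identity homomorphisms, $(\mathrm{id}_{X})_{*} = \mathrm{id}_{H_{*}(X)}$ and likewise for $Y$. Applying the previous Proposition (homotopic maps induce the same homomorphism in homology) to the two homotopies above, and then using these axioms, one gets
\[ g_{*} \circ f_{*} = (g \circ f)_{*} = (\mathrm{id}_{X})_{*} = \mathrm{id}_{H_{*}(X)}, \qquad f_{*} \circ g_{*} = (f \circ g)_{*} = (\mathrm{id}_{Y})_{*} = \mathrm{id}_{H_{*}(Y)} . \]
Thus $g_{*}$ is a two-sided inverse of $f_{*}$, so $f_{*} : H_{*}(X) \to H_{*}(Y)$ is an isomorphism of graded abelian groups; reading this off in each degree, $H_{k}(f) : H_{k}(X) \to H_{k}(Y)$ is an isomorphism for every $k$, and in particular $H_{*}(X) \simeq H_{*}(Y)$.

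Honestly, there is no real obstacle in this argument: the entire substantive content has been absorbed into the previous Proposition, whose proof — the homotopy invariance of singular homology, established by the usual prism/chain-homotopy construction — is the genuinely nontrivial step and which we are entitled to assume here. The only point deserving a word of care is the passage from ``$f_{*}$ is an isomorphism of the graded group $H_{*}$'' to ``$f_{*}$ is an isomorphism in each degree $k$'': this is immediate once one recalls that a graded homomorphism is by definition a family $\{H_{k}(f)\}_{k}$ of homomorphisms, one in each degree, and that such a family is invertible exactly when every member is.
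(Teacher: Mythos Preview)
Your argument is correct and is exactly the standard one: use functoriality together with the preceding Proposition on homotopy invariance to see that $g_{*}$ is a two-sided inverse of $f_{*}$. The paper itself does not supply a proof of this Proposition at all --- it is simply stated as a background fact from algebraic topology in the expository Section~2 --- so there is nothing to compare against; your write-up is the proof one would give, and it fits cleanly with the paper's narrative since it relies only on Proposition~1 and the functor axioms already asserted there.
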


\begin{cor}
If $f$ is a homeomorphism of spaces, then $f_{*}$ is an isomorphism.
\end{cor}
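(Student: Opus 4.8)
The plan is to reduce the statement to the functoriality of singular homology together with the behaviour of the functor on identity maps; no homotopy-theoretic input beyond what has already been recorded above is needed. Write $g : Y \to X$ for the continuous inverse of the homeomorphism $f : X \to Y$, so that, as continuous maps, $g \circ f = \mathrm{id}_{X}$ and $f \circ g = \mathrm{id}_{Y}$.

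First I would invoke covariance of the functor $H_{*}$: applying it to a composition gives the composition of the induced homomorphisms, and applying it to an identity map gives the identity homomorphism on the corresponding homology group. Concretely, $H_{*}(g \circ f) = H_{*}(g) \circ H_{*}(f)$ and $H_{*}(\mathrm{id}_{X}) = \mathrm{id}_{H_{*}(X)}$, and likewise with the roles of $X$ and $Y$ exchanged. Combining these observations with $g \circ f = \mathrm{id}_{X}$ and $f \circ g = \mathrm{id}_{Y}$ yields $g_{*} \circ f_{*} = \mathrm{id}_{H_{*}(X)}$ and $f_{*} \circ g_{*} = \mathrm{id}_{H_{*}(Y)}$. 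Hence $f_{*} = H_{*}(f)$ is a homomorphism of graded abelian groups with two-sided inverse $g_{*}$, that is, an isomorphism; since all the maps in sight are graded, $H_{k}(f)$ is an isomorphism for every $k$. Alternatively, one may simply observe that a homeomorphism is in particular a homotopy equivalence — its topological inverse doubling as a homotopy inverse — so the corollary is an immediate special case of the preceding proposition on homotopy equivalences.

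There is essentially no obstacle here: the only substantive ingredient is the functor axiom that $H_{*}$ preserves identities and composition, which is exactly the covariance recalled in the paragraph above. The one point I would be explicit about is that ``isomorphism'' is understood in the graded sense, so that the inverse relations should be read off simultaneously in every degree $k$ — which the computation above does automatically, precisely because $f_{*}$ and $g_{*}$ are graded homomorphisms.
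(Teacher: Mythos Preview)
Your argument is correct. The paper does not spell out a proof but places the statement as an immediate corollary of the preceding proposition on homotopy equivalences --- precisely the route you mention in your final ``alternatively'' sentence --- while your primary argument via bare functoriality (identities and compositions) is an equally valid and slightly more elementary path that bypasses any appeal to homotopy.
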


When the space $X$ is a finite-dimensional manifold, the homology 
groups $H_{k} (X; \Z )$ are finitely generated; thus, in this case, 
the basic structure theorem for finitely generated abelian 
groups is applicable.

\begin{theorem}[Structure Theorem]
Any finitely generated abelian group $G$ decomposes uniquely as the 
direct sum
\[ G = F \oplus \tau \]
where the abelian group $F$ is free and the group $\tau$ is a torsion 
subgroup.
\end{theorem}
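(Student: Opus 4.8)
The plan is to produce $\tau$ canonically, show that the quotient $G/\tau$ is free abelian, split the quotient map, and then observe that this already forces the claimed uniqueness. First I would set $\tau := \{\, g \in G : ng = 0 \text{ for some } n \geq 1 \,\}$, the \emph{torsion subgroup}; it is a subgroup precisely because $G$ is abelian (if $mg = 0$ and $nh = 0$ then $mn(g+h) = 0$). The group $G/\tau$ is finitely generated, being a quotient of a finitely generated group, and it is torsion-free: if $n\bar g = 0$ in $G/\tau$ then $ng \in \tau$, so $kng = 0$ for some $k \geq 1$, whence $g \in \tau$ and $\bar g = 0$.

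The heart of the matter is the lemma that a finitely generated torsion-free abelian group $A$ is free. I would prove this by choosing generators $x_1, \dots, x_n$ and then a maximal $\Z$-independent subset among them, say $x_1, \dots, x_r$ after reindexing, so that $F' := \langle x_1, \dots, x_r \rangle \cong \Z^r$ and, by maximality, for each $j > r$ there is $m_j \geq 1$ with $m_j x_j \in F'$. Putting $m := \prod_{j > r} m_j$ (with $m := 1$ if $r = n$) gives $mA \subseteq F'$. Now $mA$ is a subgroup of the free abelian group $F'$ of finite rank, hence is itself free of rank $\leq r$ — this I would cite as the standard fact that subgroups of finitely generated free abelian groups are free, itself provable by induction on the rank via projection onto a coordinate. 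Finally, multiplication by $m$ is an injective homomorphism $A \to mA$ because $A$ is torsion-free, so $A \cong mA$ is free.

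It then remains to split the surjection $\pi : G \to G/\tau$. Since $G/\tau$ is free by the lemma, I pick a basis of $G/\tau$, lift each basis element arbitrarily to $G$, and extend $\Z$-linearly to a homomorphism $s : G/\tau \to G$ with $\pi \circ s = \mathrm{id}$. The usual splitting computation then gives $G = \ker \pi \oplus \mathrm{im}\, s = \tau \oplus s(G/\tau)$, with $s(G/\tau) \cong G/\tau$ free; this establishes existence. For the \emph{uniqueness} — which here means that $\tau$ is uniquely determined and $F$ is determined up to isomorphism, not that the free complement is a unique subgroup — I would note that in any decomposition $G = F \oplus \tau'$ with $F$ free and $\tau'$ torsion, every element of finite order has trivial $F$-component (a free abelian group being torsion-free) and so lies in $\tau'$, while conversely every element of $\tau'$ has finite order; hence $\tau' = \tau$ necessarily, and consequently $F \cong G/\tau$, so its rank — equivalently $\dim_{\mathbb{Q}}(G \otimes_{\Z} \mathbb{Q})$ — is an invariant of $G$. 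The step I expect to be the main obstacle is the freeness lemma, and inside it the assertion that a subgroup of a finitely generated free abelian group is free; everything else (the torsion subgroup, the splitting, the uniqueness bookkeeping) is essentially formal once that is in hand.
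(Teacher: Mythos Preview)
Your argument is correct and is essentially the standard proof: identify $\tau$ as the torsion subgroup, show $G/\tau$ is finitely generated and torsion-free, prove such a group is free by embedding it (via multiplication by a suitable $m$) into a free group of finite rank and invoking the fact that subgroups of $\Z^r$ are free, then split the projection $G \to G/\tau$ using a basis of the free quotient. Your treatment of uniqueness is also right, and you are careful to note that it means $\tau$ is canonical and $F$ is determined only up to isomorphism.

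As for comparison with the paper: the paper does not prove this theorem at all. It is quoted as background from algebra, with a pointer to a standard reference (Lang, \emph{Algebra}) for details, including the finer description of the torsion part. So there is no ``paper's own proof'' to compare against; your proposal simply supplies what the paper deliberately omits. The one step you flag as the main obstacle --- that a subgroup of a finitely generated free abelian group is free --- is exactly the nontrivial input, and citing it (or sketching the induction on rank, as you indicate) is appropriate.
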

In fact, one can describe the torsion group $\tau$ is more detail 
(see, for example,~\cite{lang:alg}.)

The dimension of the free part of the homology group $H_{k}$ is called 
the $k$th-{\bf Betti number}, $b_{k} = \dim H_{k} (X ; \Z )$.
The {\bf Euler characteristic} $\chi (X)$ is the alternating sum of 
the Betti numbers
\[ \chi (X) = \sum _{k} {(-1)}^{k} b_{k} . \]

\section{Collections of topological necessary conditions}

The definition of certain Gauss maps is helpful in the statement of 
our results.
We shall assume that $M^{n} = \Rn$ or is an open subset of it.

\begin{definition}
\begin{enumerate}

\item Suppose the vector field $X$ is nowhere zero in $M^{n}$.
Then the {\bf Gauss map} $G_{X} : M^{n} \to S^{n-1}$ is defined by 
\[ x \mapsto \frac{X(x)}{ | X(x) | } .\]

\item Suppose that $N^{n-1} \subset M^{n}$ is a submanifold such that 
the restriction of the vector field $X$ to $N$ is nowhere zero.
Then the {\bf Gauss map} $G_{X | N} : N^{n-1} \to S^{n-1}$ is 
obtained by restricting the Gauss map $G_{X}$ to $N$.
Note that this is a map 
between two manifolds of the same dimension, one of which is a sphere.

\item If the submanifold $N^{n-1} \subset M^{n}$ is orientable, we 
define the {\bf Gauss map} $G_{N} : N^{n-1} \to S^{n-1}$ by  mapping 
$x \in N$ to the unit normal vector to $N$ at $x$ (where an `outward' 
direction is fixed by choosing an oriented basis  on $N$ and 
completing it to a basis of $\Rn$ consistent with an orientation of 
$\Rn$.) 
Note again that the Gauss map is a map from an $(n-1)$ dimensional 
space to the $(n-1)$-sphere.
\end{enumerate}
\end{definition}

\subsection{Index-Theoretic Necessary Conditions}

The \emph{topological index} of equilibrium points leads to a 
number of necessary conditions for achieving dynamics with equilibrium 
points of given stability.
These are {\emph{global} results and are rather classical; our only novelty 
is in trying to use as modern an algebraic topological framework as we 
can to express them.

If $e \in M^{n}$ is an isolated equilibrium point of the vector field 
$X$, take a ball neighborhood $U$ of $e$ (an open set homeomorphic to 
a ball) such that $e$ is the \emph{only} equilibrium of $X$ in $U$ 
and its boundary $N = \partial U$ is a closed submanifold 
homeomorphic to a sphere.
Then the Gauss map $G_{X|N}$ gives a map from the sphere $S^{n-1}$ to 
itself
\[ S^{n-1} \stackrel{h}{\rightarrow} N \stackrel{G_{X|N}}{\rightarrow} 
S^{n-1} \]
where $h^{-1}$ is the homeomorphism from $N$ to the sphere.

At the level of homology, we thus get a homomorphism $\psi = G_{X|N} 
\circ h$ from $H_{n-1} ( S^{n-1} )$ to itself.
Since this group is isomorphic to $\Z$, we get a homomorphism from 
$\Z to \Z$.
Since $\Z$ is a principal ideal domain, such maps are specified by 
the image of the generator, say $\alpha \in H_{n-1} ( S^{n-1} )$.
If, say, $\psi ( \alpha ) = k \alpha $, then $k$ is the {\bf 
topological index} of the equilibrium $e$.\footnote{Confusingly, we 
are about to give a theorem where the term `degree' is used instead 
of `toplogical index'; the two terms are equivalent.
We shall try use the qualifier `topological' to avoid confusion with 
other uses of the term index.}
It does not depend on the precise $U$ chosen.

The classical theorem of Hopf describes maps from the sphere to itself.

\begin{theorem}[Hopf's Classification Theorem]
Homotopy equivalence classes of maps from $S^{n-1}$ to itself are in 
a one-to-one correspondence with the integers.
For each integer $k$, the class of maps corresponding to it is called 
the class of maps of {\bf degree} $k$.
\end{theorem}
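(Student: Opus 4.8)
The plan is to prove Hopf's classification theorem, namely that homotopy classes $[S^{n-1}, S^{n-1}]$ are in bijection with $\Z$ via the degree.

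**Step 1: Degree is well-defined on homotopy classes.** First I would recall that a continuous map $f: S^{n-1} \to S^{n-1}$ induces a homomorphism $f_*: H_{n-1}(S^{n-1}) \to H_{n-1}(S^{n-1})$, and since $H_{n-1}(S^{n-1}) \simeq \Z$, this homomorphism is multiplication by a unique integer, which we call $\deg f$. By the first Proposition in the excerpt (homotopic maps induce the same homomorphism in homology), $\deg f$ depends only on the homotopy class of $f$. This gives a well-defined map $[S^{n-1}, S^{n-1}] \to \Z$, $[f] \mapsto \deg f$. It is elementary that $\deg(f \circ g) = \deg f \cdot \deg g$ and $\deg(\mathrm{id}) = 1$.

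**Step 2: Surjectivity.** I would exhibit, for each $k \in \Z$, a map of degree $k$. For $k = 0$ take a constant map; for $k = 1$ take the identity; for $k = -1$ take a reflection $(x_1, x_2, \dots, x_n) \mapsto (-x_1, x_2, \dots, x_n)$, whose degree is $-1$ (this can be seen from the long exact sequences / Mayer--Vietoris computation of $H_{n-1}(S^{n-1})$ in the excerpt, tracking how a reflection acts on the generator). For general $k > 0$, one standard construction collapses the complement of $k$ disjoint small balls in $S^{n-1}$ to a point, obtaining a wedge $S^{n-1} \vee \cdots \vee S^{n-1}$ ($k$ copies), then maps each sphere by the identity and uses that the induced map on $H_{n-1}$ sends the generator to $k$ times the generator; negative degrees follow by composing with a reflection. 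By multiplicativity of degree this realizes every integer.

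**Step 3: Injectivity — the hard direction.** This is the main obstacle: I must show that if $\deg f = \deg g$ then $f \simeq g$, equivalently that a map of degree $0$ is null-homotopic and that degree is a \emph{complete} invariant. I would argue by induction on $n-1$. The base case $n-1 = 1$ is the classical computation $\pi_1(S^1) \simeq \Z$ (mentioned in the excerpt), where the degree coincides with the winding number, which is a complete homotopy invariant of loops. For the inductive step, the cleanest route is to first homotope $f$ and $g$ to be smooth (or at least simplicial/cellular) and then, by an obstruction-theoretic or direct geometric argument, to compare them on successive skeleta: using that $\pi_i(S^{n-1}) = 0$ for $i < n-1$ (which itself follows by general position / transversality, since a map $S^i \to S^{n-1}$ with $i < n-1$ misses a point and hence factors through a contractible space), one shows a map $S^{n-1}\to S^{n-1}$ can be homotoped to agree with a model of the appropriate degree on the $(n-2)$-skeleton, and then the two extensions over the top cell differ by an element of $\pi_{n-1}(S^{n-1}) \simeq \Z$ which is detected precisely by the difference of degrees; if the degrees agree, the extensions are homotopic rel the skeleton. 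I expect the technical heart to be the transversality/general-position lemmas (Sard's theorem, the fact that a smooth map is homotopic to one for which a chosen value is a regular value, and that the signed count of preimages equals the homological degree) together with setting up the obstruction-theory bookkeeping carefully; these are the steps I would present in outline and then refer to a standard text such as~\cite{gr:harp} for the routine verifications.

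Combining Steps 1--3, the assignment $[f] \mapsto \deg f$ is a well-defined bijection $[S^{n-1}, S^{n-1}] \to \Z$, which is exactly the assertion of the theorem.
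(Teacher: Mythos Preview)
The paper does not prove this theorem at all: it is stated as a classical result and immediately followed by the Hopf--Whitney generalization, with a reference to Whitehead for details. So there is no ``paper's proof'' to compare against; your sketch simply supplies what the paper omits.

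Your outline is essentially sound, but Step~3 as written has a circularity you should tighten. You frame the argument as an induction on $n-1$, yet the inductive hypothesis is never actually invoked in your obstruction-theoretic step; instead you assert directly that the two extensions over the top cell differ by an element of $\pi_{n-1}(S^{n-1}) \simeq \Z$ ``detected precisely by the difference of degrees.'' But that assertion \emph{is} Hopf's theorem (modulo the passage from free to based homotopy classes, which is harmless here). The clean fix is to drop the induction scaffolding and simply cite the Hurewicz isomorphism --- which the paper itself recalls in its background section --- together with your transversality argument that $\pi_i(S^{n-1})=0$ for $0<i<n-1$: Hurewicz then gives that the degree map $\pi_{n-1}(S^{n-1}) \to H_{n-1}(S^{n-1}) \simeq \Z$ is an isomorphism, and you are done without any obstruction theory. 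If you want a genuine induction, the right tool is the Freudenthal suspension theorem, which does reduce $\pi_{n-1}(S^{n-1})$ to $\pi_{n-2}(S^{n-2})$; alternatively, the framed-cobordism argument in Milnor's \emph{Topology from the Differentiable Viewpoint} gives a self-contained geometric proof. Your obstruction-theoretic language is really the proof of the more general Hopf--Whitney theorem stated next in the paper, and is overkill for the sphere-to-sphere case.
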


For a hyperbolic equilibrium point of \emph{stability index} $k$, 
the \emph{topological index} (or degree) is equal to ${(-1)}^{n-k}$.
Degree $k$ maps are easily obtained from the degenerate equilibria at 
the origin of the system in complex form: $\dot z = z^{k}$, for $k 
\ne 0$.

The Hopf classification of maps from the sphere to itself has a 
crucial generalization to maps of an arbitrary compact manifold of 
dimension $n-1$ to a sphere of dimension $n-1$ (see 
Whitehead,~\cite{whiteh}, p.244)
\begin{theorem}[Hopf-Whitney]
The homotopy equivalence classes of maps of an $(n-1)$-dimensional 
compact manifold $N^{n-1}$ to the sphere $S^{n-1}$ are in one-to-one 
correspondence with the elements of the cohomology group $H^{n-1} ( 
N^{n-1} ; \Z )$.
\end{theorem}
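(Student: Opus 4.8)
The plan is to prove this by \emph{obstruction theory}, with $H^{n-1}(N^{n-1};\Z)$ appearing as the group of primary obstructions (``difference cochains'') for maps into $S^{n-1}$. First I would replace $N$ by a finite CW complex: a compact smooth manifold admits such a structure, and both $[-,S^{n-1}]$ and $H^{n-1}(-;\Z)$ are invariants of homotopy type, so nothing is lost. The sole property of the target that is needed is that $S^{n-1}$ is $(n-2)$-connected, i.e.\ $\pi_j(S^{n-1}) = 0$ for $j \le n-2$, while $\pi_{n-1}(S^{n-1}) \cong \Z$, the isomorphism being the ordinary degree; I fix the generator corresponding to the identity map.

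Next I would construct the correspondence $\Theta : [N, S^{n-1}] \to H^{n-1}(N;\Z)$. Given $f : N \to S^{n-1}$, the vanishing of $\pi_j(S^{n-1})$ for $j \le n-2$ allows one to homotope $f$, cell by cell up the skeleta $N^{(0)} \subset N^{(1)} \subset \cdots$, to a map that is \emph{constant} on the $(n-2)$-skeleton $N^{(n-2)}$. For each oriented $(n-1)$-cell $e_\alpha$ with characteristic map $\Phi_\alpha : (D^{n-1}, S^{n-2}) \to (N, N^{(n-2)})$, the composite $f\circ\Phi_\alpha$ sends $\partial D^{n-1}$ to the base point and so represents an element $c_f(e_\alpha) \in \pi_{n-1}(S^{n-1}) \cong \Z$. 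These integers assemble into a cellular cochain $c_f \in C^{n-1}(N;\Z)$; since $\dim N = n-1$ there are no $n$-cells, so $C^{n}(N;\Z) = 0$, the cochain $c_f$ is automatically a cocycle, and I would set $\Theta([f]) := [c_f] \in H^{n-1}(N;\Z)$.

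It then remains to show $\Theta$ is well defined, surjective, and injective. \emph{Well-definedness} is the crux: $c_f$ depends on the homotopy used to flatten $f$ over $N^{(n-2)}$ and on the chosen representative of $[f]$, but any two choices differ by a homotopy supported over $N^{(n-2)}$, and the difference-cochain bookkeeping shows the resulting $(n-1)$-cochains differ by $\delta d$ with $d \in C^{n-2}(N;\Z)$ --- a coboundary --- leaving $[c_f]$ unchanged. The cleanest way to package exactly this is: the generator of $\pi_{n-1}(S^{n-1})$ is classified by a map $S^{n-1} \to K(\Z, n-1)$ which is an isomorphism on $\pi_i$ for $i \le n-1$ (and trivially surjective on $\pi_n$), hence an $n$-connected map, so by the Whitehead comparison theorem it induces a \emph{bijection} of sets $[N, S^{n-1}] \cong [N, K(\Z, n-1)]$ because $\dim N = n-1 < n$; and $[N, K(\Z, n-1)] = H^{n-1}(N;\Z)$ by representability of ordinary cohomology, the bijection being precisely $\Theta$. \emph{Surjectivity}: a given cocycle $c$ is realized by the map that sends $N^{(n-2)}$ to the base point and, on each $(n-1)$-cell, equals (after collapsing $\partial D^{n-1}$) a self-map of $S^{n-1}$ of degree $c(e_\alpha)$, which is consistent since all cells meet $N^{(n-2)}$ at the base point. \emph{Injectivity}: if $c_f - c_{f'} = \delta d$, I would use $d$ to adjust the flattening homotopy of $f$ over $N^{(n-2)}$ so as to arrange $c_f = c_{f'}$, and then a cell-by-cell nullhomotopy of the (now vanishing) difference cochain --- with no obstruction beyond dimension $n-1$ --- gives $f \simeq f'$.

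The hard part will be the difference-cochain calculus underlying both well-definedness and injectivity: verifying that changing the auxiliary homotopy on a lower skeleton alters the recorded cochain by exactly a coboundary, and that a difference cochain vanishing in the top dimension (so with no room for secondary obstructions) forces the two maps to be homotopic. Routing all of this through the $n$-connected comparison map $S^{n-1}\to K(\Z,n-1)$ together with the dimension bound $\dim N < n$ is the most economical way to discharge it, and is essentially the argument in Whitehead, \cite{whiteh}, p.\,244.
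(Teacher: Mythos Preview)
The paper does not actually prove this theorem: it is stated as a cited result, with a pointer to Whitehead~\cite{whiteh}, p.\,244, and is used only as input to the subsequent corollary and the index discussion. So there is no ``paper's own proof'' to compare against.

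Your proposal is correct and is precisely the standard obstruction-theoretic argument that one finds in Whitehead. The key mechanism---that $S^{n-1}$ is $(n-2)$-connected, so any map from an $(n-1)$-complex can be flattened over the $(n-2)$-skeleton and then recorded by a top-dimensional cochain with values in $\pi_{n-1}(S^{n-1})\cong\Z$---is exactly right, and your shortcut through the $n$-connected comparison map $S^{n-1}\to K(\Z,n-1)$ together with the dimension bound $\dim N=n-1<n$ is the cleanest way to handle well-definedness and injectivity simultaneously. One small caveat worth noting explicitly: the argument as written tacitly assumes $n\ge 2$ (so that $S^{n-1}$ is simply connected and the obstruction machinery applies without base-point or $\pi_1$-action issues); the case $n=1$ is trivial but formally separate.
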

\begin{cor}
If $N$ is orientable, then the homotopy equivalence classes of maps 
from $N^{n-1}$ to $S^{n-1}$ are in one-to-one correspondence with the 
integers; they are thus again classified by `degree.' 
\end{cor}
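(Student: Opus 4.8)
The plan is to deduce this corollary from the Hopf–Whitney theorem quoted just above by computing the relevant cohomology group. By Hopf–Whitney, homotopy classes of maps $N^{n-1}\to S^{n-1}$ are in bijection with $H^{n-1}(N^{n-1};\Z)$, so everything reduces to showing that this group is infinite cyclic — more precisely free abelian of rank equal to the number of connected components of $N$, hence $\simeq\Z$ in the connected case, which is the one that occurs in our applications (boundaries of ball neighborhoods of equilibria, compact level sets of proper Lyapunov functions).

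First I would bring in the orientability hypothesis. A closed (compact, boundaryless) orientable manifold $N^{n-1}$ carries a fundamental class $[N]\in H_{n-1}(N;\Z)$, and Poincaré duality then supplies an isomorphism $H^{n-1}(N^{n-1};\Z)\simeq H_0(N;\Z)$ given by cap product with $[N]$ (see, e.g., \cite{gr:harp}). Since $H_0(N;\Z)\simeq\Z^{c}$ with $c$ the number of path components, we get $H^{n-1}(N;\Z)\simeq\Z$ precisely when $N$ is connected. One can also reach the same conclusion without Poincaré duality by way of the universal coefficient theorem, $H^{n-1}(N;\Z)\simeq\mathrm{Hom}(H_{n-1}(N),\Z)\oplus\mathrm{Ext}(H_{n-2}(N),\Z)$, using that for a closed orientable $(n-1)$-manifold $H_{n-1}(N)\simeq\Z$ while $H_{n-2}(N)$ is free (being Poincaré dual to $H^1$), so the $\mathrm{Ext}$ term vanishes; but the duality argument is the cleaner one.

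Second, I would identify the resulting integer with the classical degree, so that the phrase ``classified by degree'' is literally justified. Fixing a generator $\omega$ of $H^{n-1}(S^{n-1};\Z)$ and fundamental classes $[N]$, $[S^{n-1}]$ compatible with the chosen orientations, the Hopf–Whitney correspondence sends a map $f:N^{n-1}\to S^{n-1}$ to $f^{*}\omega\in H^{n-1}(N;\Z)$, and the relation $\langle f^{*}\omega,[N]\rangle=(\deg f)\,\langle\omega,[S^{n-1}]\rangle$ defines $\deg f\in\Z$; one then checks $f\mapsto\deg f$ is exactly the bijection produced above. In particular this recovers Hopf's classification theorem when $N=S^{n-1}$, and it exhibits the topological index of an isolated equilibrium introduced earlier as a genuine degree.

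The only real subtlety — the point I would flag — is bookkeeping about connectedness and closedness: the statement tacitly assumes $N^{n-1}$ is closed and connected, which is the case of interest here, whereas for a disconnected $N$ with $c$ components one obtains a bijection with $\Z^{c}$ and a separate degree on each component. Orientability is exactly the ingredient that lets one fix a consistent sign convention for these degrees via a global fundamental class; without it only $\Z/2$ information survives, which is why the corollary needs that hypothesis.
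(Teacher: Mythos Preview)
Your proof is correct and follows the same route as the paper: invoke Hopf--Whitney and then use that $H^{n-1}(N^{n-1};\Z)\simeq\Z$ for a closed connected orientable manifold. The paper simply asserts this last isomorphism in one line without justification, whereas you supply the Poincar\'e duality (or UCT) argument, the identification with the classical degree, and the connectedness caveat, so your treatment is strictly more complete.
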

This is, of course, because, for any orientable manifold, $H^{n-1} ( 
N^{n-1} ; \Z ) \simeq \Z$.
If $N$ is {\bf not orientable}, then this group is $\Z _{2}$ and two 
maps are homotopic iff they have the same mod-$2$ degree.

The {\bf global} version of the Hopf index classification result is 
the following theorem of Poincar\'{e}-Hopf
\begin{theorem}[Poincar\'{e}-Hopf]\label{PH}
\begin{enumerate}
        \item  Suppose $W^{n} \subset \Rn$ is a compact subset with nonempty 
        interior such that its boundary is an $(n-1)$-dimensional submanifold 
        of $\Rn$.
        Suppose $X$ is a vector field on $\Rn$ that is nowhere zero on the 
        boundary $\partial W$ and has a finite set of equilibrium points $E$.
        Then
        \begin{equation}
                \deg G_{X| \partial W} = \sum _{e_{i} \in E \cap W} \ind e_{i}
                \label{PH1}
        \end{equation}

        \item  Suppose $M^{n}$ is a compact manifold and $X$ is a vector 
        field on $M^{n}$ with a finite number of isolated equilibria.
        If the boundary of $M^{n}$ is not empty, we require the vector field 
        to point inwards at all points.
        The we have
        \begin{equation}
                \sum _{e_{i} \in E} \ind e_{i} = {(-1)}^{n} \chi ( M^{n} )
                \label{PH2}
        \end{equation}
    where $\chi ( M^{n} )$ is the Euler characteristic of the 
    manifold $M^{n}$ and $E$ is the set of equilibrium points.
    
    In particular, the sum of the topological indices of the 
    equilibria is a topological invariant of the manifold and thus is 
    independent of the vector field chosen.
        \item  Suppose $W^{k}$ is any submanifold of $\Rn$, with $0 \le k \le 
        n-1$.
        Consider a tubular neighborhood $N_{\ep} ( W^{k} )$ so that 
        $\partial N_{\ep} ( W^{k} )$ is an $(n-1)$-dimensional submanifold 
        of $\Rn$.
        If $X$ is any vector field on $\Rn$ such that, on $W^{k}$, $X$ has a 
        finite number of nondegenerate equilibria, then
        \begin{equation}
                \sum _{e_{i} \in E \cap W} = \deg G_{X| N_{\ep} ( W^{k} )}
                \label{PH3}
        \end{equation}
\end{enumerate}
\end{theorem}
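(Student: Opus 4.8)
The plan is to treat the three parts of Theorem~\ref{PH} as successive refinements of the same local-to-global bookkeeping argument, so that only the first part requires genuine work and the other two follow by reduction.

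\medskip

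\noindent\emph{Part 1.} First I would excise a small ball neighbourhood $U_i \subset W$ around each equilibrium $e_i \in E \cap W$, chosen disjoint and small enough that each $\partial U_i$ is a sphere on which $X$ is nowhere zero. The region $W' = W \setminus \bigcup_i \mathrm{int}\, U_i$ is then a compact manifold-with-boundary on which $X$ has \emph{no} zeros, so the Gauss map $G_X$ is defined on all of $W'$. The key step is the observation that $\partial W'$, as an oriented $(n-1)$-manifold, is $\partial W$ together with the spheres $\partial U_i$ taken with \emph{reversed} orientation; since $G_X$ extends over the interior $W'$, its restriction to $\partial W'$ is null-homotopic as a map to $S^{n-1}$, hence has degree zero. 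Concretely this is where I would invoke the homological machinery from Section~2: the fundamental class $[\partial W']$ is a boundary in $H_{n-1}(W')$ — indeed it bounds $[W']$ under the connecting map of the pair $(W', \partial W')$ — so $(G_X)_*[\partial W'] = 0$ in $H_{n-1}(S^{n-1})$. Expanding $[\partial W'] = [\partial W] - \sum_i [\partial U_i]$ and reading off degrees gives
\[
\deg G_{X|\partial W} \;=\; \sum_i \deg G_{X|\partial U_i} \;=\; \sum_{e_i \in E \cap W} \ind e_i,
\]
the last equality being the definition of the topological index recalled just before Hopf's theorem. The orientation conventions — matching the fixed orientation of $\R^n$ used in the definition of $G_N$ with the induced boundary orientations and with the generator $\alpha \in H_{n-1}(S^{n-1})$ — are the fussy part, and I expect them to be \textbf{the main obstacle}; everything else is formal once the signs are pinned down.

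\medskip

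\noindent\emph{Part 2.} For a closed manifold $M^n$ (or one with boundary on which $X$ points inward) I would embed $M^n$ in some $\R^N$, take a small tubular neighbourhood, and extend $X$ to a vector field whose zeros in the neighbourhood are exactly those of $X$ on $M^n$; applying Part 1 to the neighbourhood reduces the claim to the statement that $\sum_i \ind e_i$ is independent of $X$. That independence I would get by the standard homotopy argument: any two such vector fields are joined by a path of vector fields nonvanishing on the boundary, and Part 1 shows the total index is a homotopy invariant of the boundary Gauss map. To identify the common value with $(-1)^n \chi(M^n)$, I would exhibit \emph{one} convenient vector field — the negative gradient $-\nabla f$ of a Morse function $f$ — for which, as recalled in the Introduction, each critical point of Morse index $k$ contributes $(-1)^{n-k}$ (using the hyperbolic-equilibrium formula stated after the Hopf theorem, since $-\nabla f$ has a nondegenerate zero of stability index $k$ there), and then sum: $\sum_i (-1)^{n-k_i} = (-1)^n \sum_i (-1)^{k_i} = (-1)^n \chi(M^n)$, the last equality being the Morse-theoretic computation of the Euler characteristic via the Betti numbers of Section~2. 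The inward-pointing hypothesis on $\partial M^n$ is exactly what guarantees the boundary Gauss map has the degree $(-1)^n$ coming from an outward normal comparison, so no boundary equilibria are created or missed.

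\medskip

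\noindent\emph{Part 3.} This is Part 1 applied to $W = N_\ep(W^k)$: the tubular neighbourhood is a compact region in $\R^n$ whose boundary is an $(n-1)$-submanifold, the equilibria of $X$ lying in $N_\ep(W^k)$ are (for $\ep$ small) exactly those on $W^k$ by the nondegeneracy assumption, and \eqref{PH1} becomes \eqref{PH3} verbatim once one notes that the displayed sum $\sum_{e_i \in E \cap W}$ in the statement is shorthand for $\sum_{e_i \in E \cap W} \ind e_i$. The only thing to check is that shrinking $\ep$ does not move equilibria across $\partial N_\ep(W^k)$, which follows from compactness of $W^k$ and the isolatedness of the zeros.
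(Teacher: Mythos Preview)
The paper does not actually prove Theorem~\ref{PH}: immediately after the statement it says only that ``Milnor~\cite{milnor:tdv} proves versions (2) and (3) and contains a nice discussion,'' and moves on to examples. So there is no in-paper argument to compare against.

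That said, your proposal is correct and is essentially the classical proof one finds in Milnor's book. The excision-of-balls argument for Part~1, with the homological observation that the Gauss map on $\partial W'$ has total degree zero because it extends over $W'$, is exactly the standard mechanism; your reduction of Part~2 via an embedding plus a Morse-gradient computation, and of Part~3 to Part~1 by taking $W = N_\ep(W^k)$, are likewise the expected routes. Two small remarks: in Part~2 you should be explicit that the paper's ``stability index'' convention makes a Morse-index-$k$ critical point of $f$ into a stability-index-$k$ zero of $-\nabla f$ (this is consistent with the paper's formula $(-1)^{n-k}$ and with its claim that an attractor has index $(-1)^n$, but it is worth one line since the terminology is nonstandard); and in Part~1 the homological phrasing is cleaner if you say that $i_*[\partial W'] = 0$ in $H_{n-1}(W')$ by exactness of the pair $(W',\partial W')$, whence $(G_X|_{\partial W'})_* = (G_X|_{W'})_* \circ i_*$ kills $[\partial W']$ --- which is what you wrote, just slightly rearranged.
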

We have collected different versions of this important theorem to help 
the reader find the most convenient form for extracting topological 
information in applications.
Milnor~\cite{milnor:tdv} proves versions (2) and (3) and contains a 
nice discussion.

\begin{remark}
The index already contains considerable topological information for 
the purposes of extracting necessary conditions.
For the case of an asymptotically {\bf attracting equilibrium}, for example, 
the topological index is equal to ${(-1)}^{n}$, which means that the generator 
of $H_{n-1} ( S^{n-1} )$ is mapped to itself or its negative, 
depending on the parity of $n$.
As a result, the Gauss map is an isomorphism in homology and we 
conclude that it must then be surjective and injective.
The surjectivity is essentially the {\bf Krasnosel'skii-Brockett} 
condition and the injectivity was derived by {\bf Coron}.
The form we have given is, however, considerably more general. 
\end{remark}
\begin{remark}
It must be emphasized that the index is \emph{`blind'} to all other 
dynamical features except equilibria.
Looking at the same point from the other side of the equalities in 
Theorem~\ref{PH}, the topological type of the Gauss map in the 
large (on the boundary of an enclosing set) affects the configuration 
of equilibria inside---and fixes the sum of their indices.
\end{remark}

A few examples as simple illustrations of the statements of the 
theorem:

\begin{example}
In $\Rn$, a ball with a vector field pointing inwards at the boundary 
must contain equilibria whose index sum is ${(-1)}^{n}$.
If these are all hyperbolic, then the options are
\begin{itemize}
        \item  A single attracting equilibrium.

        \item  Two attractors and a one-saddle.

        \item  If $n$ is even, a single repeller is not ruled out; notice 
        that the two cases can be distinguished using the Conley index, since 
        the exit set differs for the two cases.

        \item  Any other configuration of equilibria with the same net index 
        sum.
\end{itemize}
\end{example}

\begin{example}
In $\R ^{3}$, an embedded {\bf torus} $T^{2}$ gives possible Gauss 
maps of arbitrary degree, since its top homology is equal to $\Z$.
If, however, we know that there are no enclosed equilibria, as for 
example in the case where the torus isolates a limit cycle, then the 
degree must be zero, by part (1) of Theorem~\ref{PH}, independently 
of the stability type of the limit cycle.

This means that the Gauss map is homotopic to the constant map and 
hence \emph{does not have to be onto} (it is not onto in general, for a small 
enough torus around the limit cycle).
Thus, no necessary condition is derivable in this case, whether the 
limit cycle is stable or not.
\end{example}

\begin{example}
On the torus $T^{2}$ we have, by part (2) of the Theorem, that any 
vector field must have total index sum equal to zero, since the Euler 
characteristic of the torus is zero.
Thus, vector fields that everywhere nonzero are permissible 
topologically, as are vector fields with one attractor and one saddle, 
one repeller and a saddle, one attractor, one repeller and two saddles 
etc.
\end{example}

\subsection{Necessary conditions using the topological index}

It should be clear from the examples how to derive necessary 
conditions for achieving global dynamics from the index theorems.

Suppose given a Morse specification of gradient type, $\cM = ( E, 
h_{0} )$, with Morse-lyapunov functions $\cF ( \cM )$.
In the state space manifold, any choice of an oriented hypersurface 
that avoids $|E|$ has a Gauss map degree fixed by the sum of the 
indices of the enclosed `equilibria'.
If this is non-zero, this implies that there must exist control 
sections such that the controlled dynamics give a Gauss map with the 
desired property.
In particular, if the index sum is equal to plus or minus one, then 
the Gauss map is onto.
Let us remark that the conditions obtained can iether be used \emph{
locally} to check, for example, local stabilizability by requiring 
the map to have degree ${(-1)}^{n}$ for an arbitrarily small  
neighborhood of the equilibrium, or \emph{globally}, since the only 
relevant information is the position and stability of the desired 
equilibria and hence the index/degree results hold for any compact 
hypersurface avoiding $|E|$.

A more elegant algebraic topological way of checking \emph{simultaneously} all 
necessary conditions is the following (this does not make it easier 
to check in concrete cases):

We do first the case of local asymptotic stabilizability.

Suppose a control section $U \in \Gamma (D)$ is found that locally 
stabilizes the origin $0$ in some neighborhood $B$; it will be 
helpful to consider the set, for $\epsilon >0$,
\[
\Sigma _B = \{ (x,v) \in D {|}_{B} \; ; \; X(x) + v = 0 \} 
\] 
and the sequence
\begin{eqnarray*}
B \setminus \{ 0 \} \stackrel{\graph U}{\longrightarrow} B
\times \Rm \setminus \Sigma_B \stackrel{\iota}{\hookrightarrow} T \Rn
{|}_B \setminus \{ 0 \}  \\
\stackrel{G}{\longrightarrow} S \Rn {|}_B
\stackrel{\pi}{\longrightarrow} S^{n-1}  
\end{eqnarray*}
where $\graph U (x) = (x,U(x))$, $\iota$ is the inclusion map, $G$
is the Gauss map and $\pi$ is the obvious projection in the trivial
local sphere bundle. 

Since $0$ is an isolated equilibrium of $X+U$, $X+U \ne 0$ in $B
\setminus \{ 0 \}$ and the above is well-defined.

\subsection{A reinterpretation of Coron's condition}

With the tools we have at our disposal, it is now easy to give a more
geometric interpretation of the necessary condition for local feedback
stabilization given in~\cite{coron}:
We start by noticing that, if $B$ is a ball neighborhood of the 
equilibrium $0$, $B \setminus \{ 0 \}$ is homotopically
equivalent to $S^{n-1}$ (it actually retracts to the sphere).
Thus the composed map defined by the above sequence, call it $\phi$,
\[
\phi : B \setminus \{ 0 \} \to S^{n-1}
\]
has a well-defined degree, since $0$ is asymptotically stable for $X+U$
and this degree is equal to ${(-1)}^n$.
This means that, at the level of, for example, homology (or homotopy),
the generator, call it $\alpha$, of $H_{n-1} ( S^{n-1} ) \simeq \Z$ is
in the image of $\phi$.
In other words, if $0$ is LAS, then there is some local section such
that the degree of the above map is defined and the image of the
corresponding homomorphism at the level of homology is the whole of
$H_{n-1} ( S^{n-1} )$.
This is essentially Coron's result:
Consider the commutative diagram
\begin{equation}
\begin{array}{ccc}
        B \setminus \{ 0 \} & \rightarrow  & S^{n-1}  \\
        \downarrow & \nearrow &   \\
        D_ \setminus \Sigma _V  & &
\end{array}
\end{equation}
where the vertical map is inclusion and the map from $D_B \setminus
\Sigma _B$ to $S^{n-1}$ will be denoted also by $X+U$ and is given by
the composition $(x,v) \mapsto X(x)+v \mapsto G( X(x) +v) $.
We have that
\[
\phi _* ( H_{n-1} ( B \setminus \{ 0 \} )) = H_{n-1} ( S^{n-1} ) .
\]

\begin{theorem}[Coron, 1990]
If the system $(X,D)$ is locally asymptotically stabilizable, then
\[
{(X+U)}_* ( H_{n-1} ( D_B \setminus \Sigma _B ) ) = H_{n-1} ( S^{n-1}
). 
\]
\end{theorem}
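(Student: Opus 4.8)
The plan is to deduce Coron's statement from the degree calculation already established for the composed map $\phi : B \setminus \{0\} \to S^{n-1}$, by factoring $\phi$ through $D_B \setminus \Sigma_B$ and using functoriality of homology. First I would observe that, since $U \in \Gamma(D)$ is a smooth (or at least continuous) control section, the map $\graph U : B \setminus \{0\} \to D_B \setminus \Sigma_B$, $x \mapsto (x, U(x))$, is well-defined (it lands outside $\Sigma_B$ precisely because $X(x) + U(x) \neq 0$ for $x \neq 0$, as $0$ is an isolated equilibrium of $X + U$) and continuous. Composing with the map ${X+U} : D_B \setminus \Sigma_B \to S^{n-1}$, $(x,v) \mapsto G(X(x)+v)$, gives exactly $\phi$; this is the commutative triangle displayed just before the statement. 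Passing to homology in dimension $n-1$ and using that $H_*$ is a covariant functor (Proposition on homotopic maps and the functoriality discussion in Section~2), we get
\[
\phi_* = {(X+U)}_* \circ {(\graph U)}_* : H_{n-1}(B \setminus \{0\}) \to H_{n-1}(S^{n-1}).
\]

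Next I would invoke the fact, recalled in the subsection ``A reinterpretation of Coron's condition,'' that $B \setminus \{0\}$ deformation retracts onto $S^{n-1}$, so $H_{n-1}(B \setminus \{0\}) \simeq \Z$, and that local asymptotic stability of $0$ for $X+U$ forces $\deg \phi = {(-1)}^n$, whence $\phi_*$ is multiplication by ${(-1)}^n$ on $\Z$ and is therefore surjective: $\phi_*(H_{n-1}(B\setminus\{0\})) = H_{n-1}(S^{n-1})$. Now the elementary set-theoretic fact that if a composite $g \circ f$ is surjective then $g$ is surjective, applied to the factorization above, yields immediately
\[
{(X+U)}_*\bigl(H_{n-1}(D_B \setminus \Sigma_B)\bigr) \supseteq \phi_*\bigl(H_{n-1}(B\setminus\{0\})\bigr) = H_{n-1}(S^{n-1}),
\]
and since the reverse inclusion is trivial, equality holds. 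That is the claimed conclusion, with $U$ the stabilizing section whose existence is hypothesized.

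The only genuinely substantive point to nail down is the degree computation $\deg \phi = {(-1)}^n$, i.e. that the Gauss map of an asymptotically stable vector field restricted to a small sphere around the equilibrium has topological index ${(-1)}^n$; but this is precisely the content of the Remark following Theorem~\ref{PH} (the attracting-equilibrium case of Poincar\'e--Hopf), so it may be cited rather than reproved. Everything else is formal: well-definedness of $\graph U$ off $\Sigma_B$, continuity of the three constituent maps, and functoriality. The main obstacle, if any, is a bookkeeping one — making sure the domain $D_B \setminus \Sigma_B$ and the map ${X+U}$ are set up so that the triangle literally commutes on the nose (not merely up to homotopy), so that no homotopy-invariance argument is even needed; once the diagram commutes, the homology conclusion is automatic. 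A remark worth appending is that the argument shows more than surjectivity of ${(X+U)}_*$: it shows the image already contains the image of the ``section part'' $\graph U$, so the obstruction is intrinsic to the reachable directions of the closed-loop field and does not depend on the full fibre structure of $D_B \setminus \Sigma_B$.
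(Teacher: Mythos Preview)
Your proposal is correct and follows essentially the same route as the paper: you use the commutative triangle $\phi = (X+U)\circ\graph U$ set up just before the theorem, invoke the degree computation $\deg\phi = (-1)^n$ coming from asymptotic stability (the Remark after Theorem~\ref{PH}), and conclude by the elementary fact that surjectivity of the composite $\phi_*$ forces surjectivity of $(X+U)_*$. The paper's argument is exactly this, only stated more tersely in the paragraph preceding the theorem; your write-up simply makes the functoriality step and the ``surjective composite $\Rightarrow$ surjective second factor'' step explicit.
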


\subsection{Generalizations}

The simple reasoning that led to Coron's result can be generalized to
equilibrium points that are not attractors, but have a well-defined
stability index.

\begin{theorem}
Let $0$ be an equilibrium of the state dynamics $X$ of the control pair
$(X,D)$.
If there is a continuous local feedback that yields dynamics $X+U$ with
$0$ an equilibrium of index $k \, , \, 0 \le k \le n $, then
\[
{( X+U })_* ( H_{n-1} ( D_B \setminus \Sigma _B )) = H_{n-1} ( S^{n-1}
) .
\]  
\end{theorem}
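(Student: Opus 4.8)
The plan is to run the argument behind Coron's condition essentially verbatim, the only change being that the degree $(-1)^n$ — valid when $0$ is an attractor — is replaced by $(-1)^{n-k}$, the topological index of a hyperbolic equilibrium of stability index $k$; since this is still a unit in $\Z$, the surjectivity conclusion survives. Concretely, I would fix a ball neighborhood $B$ of $0$ small enough that $0$ is the only equilibrium of $X+U$ in $\overline{B}$ and $\partial B$ is homeomorphic to $S^{n-1}$, and recall that $B \setminus \{0\}$ deformation retracts onto $\partial B$, so $H_{n-1}(B \setminus \{0\}) \simeq H_{n-1}(S^{n-1}) \simeq \Z$. Since $0$ is an isolated equilibrium of $X+U$ we have $X+U \ne 0$ on $B \setminus \{0\}$, so the composite
\[ \phi \;=\; \pi \circ G \circ \iota \circ \graph U \;:\; B \setminus \{0\} \longrightarrow S^{n-1} \]
of the sequence introduced above is well defined; in particular $\graph U(x) = (x,U(x))$ lands in $D_B \setminus \Sigma_B$ precisely because $X(x)+U(x)=0$ only at $x=0$.

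Next I would compute the degree of $\phi$. Restricted to $\partial B$, and after composing with the homeomorphism $h : S^{n-1} \to \partial B$, the map $\phi$ is exactly the Gauss map $G_{X+U\mid\partial B}$, whose degree is by definition the topological index of the isolated equilibrium $0$; since $0$ is hyperbolic of stability index $k$, that index equals $(-1)^{n-k} = \pm 1$. Hence $\phi_* : H_{n-1}(B\setminus\{0\}) \to H_{n-1}(S^{n-1})$ is multiplication by $\pm 1$ on $\Z$ and is therefore surjective. But $\phi$ factors as ${(X+U)} \circ \graph U$, where $\graph U : B \setminus \{0\} \to D_B \setminus \Sigma_B$ and ${(X+U)} : D_B \setminus \Sigma_B \to S^{n-1}$ is the map $(x,v) \mapsto G(X(x)+v)$ of the commutative diagram used for Coron's condition. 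By functoriality of singular homology, $\phi_* = {(X+U)}_* \circ {(\graph U)}_*$, so
\begin{align*}
H_{n-1}(S^{n-1}) &= \phi_*\bigl(H_{n-1}(B\setminus\{0\})\bigr) = {(X+U)}_*\bigl({(\graph U)}_*(H_{n-1}(B\setminus\{0\}))\bigr) \\
&\subseteq {(X+U)}_*\bigl(H_{n-1}(D_B\setminus\Sigma_B)\bigr) \subseteq H_{n-1}(S^{n-1}),
\end{align*}
and equality holds throughout, which is the assertion of the theorem.

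The step I expect to need the most care is the identification $\deg \phi = (-1)^{n-k}$. This is where the hypotheses really enter: one needs $0$ to be an \emph{isolated} equilibrium of $X+U$ whose index $k$ is the stability index of a hyperbolic rest point, so that the value $(-1)^{n-k}$ quoted earlier applies and, crucially, is a \emph{unit} in $\Z$ — a topological index that was nonzero but different from $\pm 1$ would only put a proper subgroup of $H_{n-1}(S^{n-1})$ in the image, and the theorem would fail in that form. It also uses that $B$ may be taken small enough that $G_{X+U\mid\partial B}$ represents precisely this index, which is the standard fact that the topological index is independent of the isolating neighborhood. Everything else — well-definedness of each arrow, the retraction of the punctured ball onto $S^{n-1}$, and functoriality of $H_{*}$ — is routine and identical to the attractor case treated above.
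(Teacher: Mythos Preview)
Your proposal is correct and is exactly the approach the paper intends: it states the theorem without proof under the heading ``Generalizations,'' remarking only that ``the simple reasoning that led to Coron's result can be generalized to equilibrium points that are not attractors, but have a well-defined stability index.'' You have supplied precisely that generalization, correctly identifying that the only change is the replacement of $(-1)^n$ by $(-1)^{n-k}$ and that surjectivity persists because this is still a unit in $\Z$.
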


Finally, necessary conditions applicable to an arbitrary
compact, connected IIS $\cS$, isolated by the set $B \subset M^n$ can 
be given.
More explicitly, we assume that there is a local feedback $U: B \to
D$ such that $X+U$ has an IIS $\cS$, whose dynamical structure is known
(for example, $\cS$ as a set consists of a number of equilibria and
limit cycles and their connecting orbits.)
Notice that $X+U \ne 0$ in $B \setminus \cS$.

\begin{lemma}
Endow $M^n$ with a Riemannian metric.
There is a function $h$ defined on $B \setminus \cS$ such that its
gradient vector field $\nabla h$ is topologically equivalent to $X+U$ and
such that the Gauss maps of $\nabla h$ and $X_U$ induce the same
homomorphisms on homology, both $G_{\nabla h}$ and $G_{X_U}$ mapping
\[
H_* ( V \setminus \cS ) \rightarrow H_* ( S M^n {|}_{V \setminus \cS} ).
\]
\end{lemma}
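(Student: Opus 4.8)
The plan is to obtain $h$ as minus a Lyapunov function for the flow of $X+U$ on the punctured isolating neighbourhood, and then to read the homological statement off from a ``no–antipodes'' homotopy, so that no delicate orbit analysis is needed for the part of the lemma that is actually used later.

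First I would fix the Riemannian metric and recall that, since $\cS$ is isolated by $B$, every orbit through a point of $B\setminus\cS$ must leave $B$ in forward or in backward time, so the flow of $X+U$ on $B\setminus\cS$ is gradient-like. By the Conley--Wilson theory of Lyapunov functions for isolated invariant sets (followed by the usual smoothing) this yields a smooth $L\colon B\setminus\cS\to\R$ with $dL_{x}\bigl((X+U)(x)\bigr)<0$ for every $x\in B\setminus\cS$: when $\cS$ is an asymptotically stable attractor this is exactly Wilson's smooth Lyapunov function (with $L^{-1}(0)=\cS$ on a basin neighbourhood), and when $\cS$ has the assumed structure --- finitely many equilibria and periodic orbits together with their connecting orbits --- one takes a function that is constant on each Morse set and strictly decreasing along connecting orbits and restricts it to $B\setminus\cS$, where it is then strictly decreasing. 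Note that $dL(X+U)<0$ already forces $\nabla L\ne 0$ throughout $B\setminus\cS$.

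Next I would set $h:=-L$, so that $\nabla h=-\nabla L$, and observe that $dL(\nabla h)=-\langle\nabla L,\nabla L\rangle=-|\nabla L|^{2}<0$ on $B\setminus\cS$. Hence $X+U$ and $\nabla h$ both lie, at every point, in the open half-space $\{\,dL<0\,\}$, so the straight-line homotopy $W_{t}:=(1-t)(X+U)+t\,\nabla h$ satisfies $dL(W_{t})<0$ and is therefore nowhere zero on $B\setminus\cS$ for all $t\in[0,1]$. Normalising gives a homotopy $x\mapsto\bigl(x,\,W_{t}(x)/|W_{t}(x)|\bigr)$ of sections of the sphere bundle $SM^{n}\big|_{B\setminus\cS}$ joining $G_{X_{U}}$ to $G_{\nabla h}$; since homotopic maps induce the same homomorphism in homology (the Proposition above), $(G_{X_{U}})_{*}=(G_{\nabla h})_{*}$ as maps $H_{*}(B\setminus\cS)\to H_{*}\bigl(SM^{n}\big|_{B\setminus\cS}\bigr)$, which is the stated conclusion (with $V$ read as any isolating neighbourhood inside $B$). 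For the topological-equivalence clause I would note that $X+U$ and $\nabla h$ are both transverse to the level sets of $L$ and cross them toward decreasing $L$; reparametrising each field so that $L$ decreases at unit rate along it makes $L$ a common ``time'' function and identifies both flows with flows in a product $\Lambda\times I$ over a level set $\Lambda=L^{-1}(c)$, producing an orbit- and orientation-preserving homeomorphism.

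The hard part is the first step: producing $L$ with $dL(X+U)<0$ for a \emph{general} isolated invariant set, where --- unlike the attractor case --- no decreasing function on all of $B$ exists, and where one must also control the orbits that exit $B$ through $\partial B$ at differing $L$-levels before the foliation argument of the previous paragraph is fully rigorous. I expect the clean route is to replace $B$ by an isolating block, whose entrance and exit sets organise precisely these orbits, and then to invoke the standard Lyapunov-function construction for blocks; once $L$ is in hand, the remaining steps are formal.
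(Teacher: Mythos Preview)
The paper does not actually prove this lemma; it is stated and immediately used. So there is no line-by-line comparison to make. That said, your approach is exactly in the spirit of what the paper does establish: in the section that follows the lemma, the paper proves (for a compact regular Lyapunov level set $\cV$) that $G_{X|\cV}$ and $G_{-\nabla V|\cV}$ are homotopic, via the convex interpolation $Y_t=(1-t)X_n+tX$ with $X_n$ the component of $X$ along $-\nabla V$. Your straight-line homotopy $W_t=(1-t)(X+U)+t\nabla h$, kept nonvanishing by the half-space condition $dL(\cdot)<0$, is the same device transplanted from a single level set to all of $B\setminus\cS$, and the passage to equal induced maps in homology is then immediate from the proposition you cite. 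So the homological half of your argument is both correct and faithful to the paper's method.

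Where you go beyond the paper is in the existence of $L$ and in the topological-equivalence clause. You are right to flag the construction of a smooth strict Lyapunov function on $B\setminus\cS$ as the genuine content; invoking Wilson/Conley and, if needed, replacing $B$ by an isolating block is the standard route and is adequate here. Your sketch of topological equivalence (reparametrise both fields so $L$ decreases at unit speed, then match orbits via the common $L$-value over a fixed level $\Lambda$) is the right idea, but as you note it needs a bit more care near $\partial B$ where orbits exit at varying $L$-levels; working inside an isolating block, or restricting to a collar between two regular levels of $L$, makes the product-structure argument clean. None of this conflicts with the paper --- it simply fills in what the paper leaves unproved.
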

Now, as we did for the case of local stabilization, we have the map
$\phi$ defined by the composite map below 
\begin{equation}
D_{B \setminus \cS} \setminus \Sigma  \stackrel{X+v}{\rightarrow} 
T M^n \setminus \Sigma \stackrel{G}{\rightarrow}  S M^n 
\end{equation} 
which induces the map $\phi _*$ in homology
\[
H_* ( D_{B \setminus \cS} ) \rightarrow H_* ( S M^n {|}_{B \setminus
\cS} ) .
\]
We now have the result
\begin{theorem}
If the control pair $(X,D)$ can achieve dynamics with IIS $\cS$
isolated by the set $B$, then the images of the maps $\phi _*$ and
$G_{- \nabla h}$ in $ H_* ( S M^n {|}_{B \setminus \cS} )$ coincide.
\end{theorem}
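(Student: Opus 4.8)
The plan is to reduce the statement, via the Lemma, to a comparison between $\phi_*$ and the homomorphism induced by the Gauss map of $X+U$ itself, and then to carry out that comparison through the single map that links the two pictures, the section $\graph U$. One inclusion between the two images will come out for free from this factorisation together with the Lemma; the other inclusion --- that $\phi$ does not ``see'' any more homology than $\graph U$ carries across --- is the part that needs genuine work.

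First I would isolate the elementary fact behind the statement. Because $X+U\neq 0$ on $B\setminus\cS$, the section $\graph U:B\setminus\cS\to D_{B\setminus\cS}$, $x\mapsto(x,U(x))$, in fact takes values in the complement $D_{B\setminus\cS}\setminus\Sigma$, where $\Sigma=\{(x,v):X(x)+v=0\}$. Composing with $\phi$ and unwinding the definitions,
\[
(\phi\circ\graph U)(x)=\Bigl(x,\ \frac{X(x)+U(x)}{|X(x)+U(x)|}\Bigr)=G_{X+U}(x),
\]
so $\phi\circ\graph U$ is precisely the Gauss map of $X+U$, viewed as a section of $SM^n|_{B\setminus\cS}$. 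Hence in homology $\phi_*\circ(\graph U)_*=(G_{X+U})_*$, which immediately gives
\[
\operatorname{im}\bigl((G_{X+U})_*\bigr)\ \subseteq\ \operatorname{im}(\phi_*)\qquad\text{in }H_*\bigl(SM^n|_{B\setminus\cS}\bigr).
\]
By the Lemma --- read with the sign convention, consistent with the present statement, in which the gradient field topologically equivalent to $X+U$ is $-\nabla h$ --- the Gauss maps of $-\nabla h$ and of $X+U$ induce the \emph{same} homomorphism in homology; therefore $\operatorname{im}\bigl((G_{-\nabla h})_*\bigr)=\operatorname{im}\bigl((G_{X+U})_*\bigr)\subseteq\operatorname{im}(\phi_*)$.

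The reverse inclusion is the substantive part. It is enough to show that $\graph U$ induces a surjection onto $H_*(D_{B\setminus\cS}\setminus\Sigma)$, for then $\operatorname{im}(\phi_*)=\operatorname{im}\bigl(\phi_*\circ(\graph U)_*\bigr)=\operatorname{im}\bigl((G_{-\nabla h})_*\bigr)$ and the two images coincide. The natural tool is the fibrewise straight-line homotopy $H_t(x,v)=\bigl(x,(1-t)v+tU(x)\bigr)$: it stays inside the convex fibre $D_x$, and it avoids $\Sigma$ since $X(x)+(1-t)v+tU(x)=(1-t)(X(x)+v)+t(X(x)+U(x))$ is a convex combination of two nonzero vectors of the affine subspace $X(x)+D_x$ and hence can vanish only when $0\in X(x)+D_x$. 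When $0\notin X(x)+D_x$ for every $x\in B\setminus\cS$ --- equivalently $\Sigma$ is empty over $B\setminus\cS$, so $D_{B\setminus\cS}\setminus\Sigma$ is an honest vector bundle over $B\setminus\cS$ --- this homotopy is a deformation retraction onto the image of $\graph U$ and we are done. The difficulty is concentrated in the opposite case, over the locus where $0\in X(x)+D_x$: there the fibre of $\phi$ is the radial projection of a punctured linear subspace onto a proper subsphere of the sphere fibre, and one must verify --- by a dimension count on the fibre, or by obstruction theory along the one-parameter families that occur --- that deforming $v$ to $U(x)$ does not enlarge the image in homology. That fibrewise comparison is where the real work lies; everything else is formal bookkeeping with the Lemma and with the factorisation of $\phi$ through $\graph U$.
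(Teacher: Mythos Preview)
The paper gives no proof of this theorem; it is stated immediately after the Lemma and the definition of $\phi$, and the next section begins without further comment. What the surrounding text supplies is the pattern already used for Coron's theorem a few paragraphs earlier: the factorisation $G_{X+U}=\phi\circ\graph U$ together with the Lemma gives
\[
\operatorname{im}\bigl((G_{-\nabla h})_*\bigr)=\operatorname{im}\bigl((G_{X+U})_*\bigr)\subseteq\operatorname{im}(\phi_*),
\]
and it is this inclusion that functions as the necessary condition (just as, in the Coron case, one concludes from the commutative triangle that $\operatorname{im}((X+U)_*)$ contains, hence equals, all of $H_{n-1}(S^{n-1})$). Your derivation of this inclusion is correct and is exactly that argument.

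Your hesitation about the reverse inclusion is justified, and the gap you leave is not a defect of your write-up but of the statement itself. Take $M=\mathbb{R}^n$, $D=TM$, $\cS=\{0\}$, $B$ a small ball. The change of variables $(x,v)\mapsto(x,X(x)+v)$ identifies $D_{B\setminus\{0\}}\setminus\Sigma$ with $(B\setminus\{0\})\times(\mathbb{R}^n\setminus\{0\})$ and carries $\phi$ to the fibrewise radial projection $(x,w)\mapsto(x,w/|w|)$, a homotopy equivalence onto $SM^n|_{B\setminus\{0\}}\simeq S^{n-1}\times S^{n-1}$; hence $\phi_*$ is surjective. On the other hand $G_{X+U}$ is merely a section of this sphere bundle over $B\setminus\{0\}\simeq S^{n-1}$, so $(G_{X+U})_*$ maps $H_{n-1}\cong\Z$ into $H_{n-1}(S^{n-1}\times S^{n-1})\cong\Z^2$ with image of rank at most one. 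The two images do not coincide. So ``coincide'' in the theorem should be read as the single inclusion you have already proved; your attempt to push the straight-line homotopy through the locus $\{x:0\in X(x)+D_x\}$ cannot be completed because the reverse inclusion is simply false there.
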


\section{Homotopy equivalence and homotopic results}

An elementary, but fundamental result forms the key to an alternative 
approach to the derivation of necessary conditions.
It concerns the Gauss maps of a gradient vector field of a Lyapunov 
function for the dynamics $X$ and the Gauss map of the dynamics on 
level sets of the Lyapunov function.

\begin{theorem}
Let $\cV ^{n-1}$ be  a compact regular level set of some Lyapunov 
function $V$ for the dynamics $X$ on $M^{n} \subset \Rn$.
Then, the Gauss maps $G_{X| \cV}$ and $G_{- \nabla V | \cV}$ os the 
vector field $X$ and of the gradient vector field of $V$ with respect 
to any riemannian metric are homotopy equivalent.
\end{theorem}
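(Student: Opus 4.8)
The plan is to exhibit an explicit homotopy between the two Gauss maps $G_{X|\cV}$ and $G_{-\nabla V|\cV}$, using the defining property of a Lyapunov function as the geometric lever. Recall that $V$ being a Lyapunov function for $X$ means $\langle \nabla V(x), X(x)\rangle = dV_x(X(x)) < 0$ at every point where $X$ does not vanish, and on a regular level set $\cV = V^{-1}(c)$ the gradient $\nabla V$ is nowhere zero and normal to $\cV$. The key observation is that, for each $x \in \cV$, the two unit vectors $G_{X|\cV}(x) = X(x)/|X(x)|$ and $G_{-\nabla V|\cV}(x) = -\nabla V(x)/|\nabla V(x)|$ lie in the same open half-space $\{\,v : \langle v, \nabla V(x)\rangle < 0\,\}$ of $\R^n$; in particular they are never antipodal.

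First I would make precise that $X$ is nowhere zero on $\cV$: since $V$ is a Lyapunov function, $dV_x(X(x)) < 0$ on $\cV$ (the level set being regular, $\cV$ contains no equilibrium of $X$), so $G_{X|\cV}$ is genuinely defined, and likewise $G_{-\nabla V|\cV}$ is defined because $c$ is a regular value. Second, I would write down the straight-line homotopy in $\R^n$,
\begin{equation}
H(x,t) = (1-t)\,\frac{X(x)}{|X(x)|} + t\,\frac{-\nabla V(x)}{|\nabla V(x)|}, \qquad x\in\cV,\ t\in[0,1],
\end{equation}
and check that $H(x,t)$ never vanishes: pairing with $\nabla V(x)$ gives $\langle H(x,t), \nabla V(x)\rangle = (1-t)\,dV_x(X(x))/|X(x)| - t\,|\nabla V(x)| < 0$ for all $t\in[0,1]$, since both terms are $\le 0$ and at least one is strictly negative. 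Hence $H(x,t)\ne 0$ throughout, and the normalized homotopy $\widetilde H(x,t) = H(x,t)/|H(x,t)|$ is a continuous map $\cV\times[0,1]\to S^{n-1}$ with $\widetilde H(\cdot,0) = G_{X|\cV}$ and $\widetilde H(\cdot,1) = G_{-\nabla V|\cV}$. Continuity and smoothness are routine since $X$, $\nabla V$, $|X|$, $|\nabla V|$ and $|H|$ are all continuous and nonvanishing on the relevant sets.

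One technical point deserves care: the statement allows $\nabla V$ to be taken with respect to \emph{any} Riemannian metric, whereas the Gauss map $G_{-\nabla V|\cV}$ maps into $S^{n-1}\subset\R^n$ using the ambient Euclidean structure. The cleanest route is to observe that for any metric $g$, the $g$-gradient $\nabla^g V$ satisfies $dV_x(\nabla^g V(x)) = |\nabla^g V(x)|_g^2 > 0$ at regular points, and $dV_x(w) = \langle \nabla^{\mathrm{eucl}} V(x), w\rangle$ for all $w$, so $\langle \nabla^{\mathrm{eucl}} V(x), \nabla^g V(x)\rangle > 0$: the Euclidean and $g$-gradients always lie in the same open half-space, hence $G_{-\nabla^g V|\cV}$ and $G_{-\nabla^{\mathrm{eucl}} V|\cV}$ are themselves homotopic by the same straight-line argument. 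Composing (or simply replacing $\nabla V$ by $\nabla^g V$ in the displayed homotopy and repeating the sign computation, which still goes through because $dV_x(\nabla^g V(x)) > 0$) settles the metric-independence. The only real obstacle is bookkeeping — making sure $\cV$ avoids equilibria of $X$ so that everything is well defined — and this is immediate from the Lyapunov and regularity hypotheses; the homotopy itself is the elementary ``two vectors in a common half-space are not antipodal'' trick, so no degree theory or Hopf machinery is needed for this particular theorem, only for its consequences.
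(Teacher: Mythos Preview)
Your proof is correct and follows essentially the same strategy as the paper's: a straight-line homotopy whose non-vanishing is guaranteed by the Lyapunov inequality $dV_x(X(x))<0$. The only cosmetic difference is that the paper interpolates between $X$ and its normal projection $X_n$ onto $\spanek(\nabla V)$ (so that the normal component stays fixed at $X_n\neq 0$ throughout), whereas you interpolate between the already-normalized unit vectors and invoke the ``same open half-space'' argument; your treatment also makes explicit the metric-independence clause, which the paper's proof leaves implicit.
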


\begin{proof}
Decompose the tangent space $T M^n {|}_{\cV}$ into the tangent space of
$\cV$ and the span of the gradient vector field $\nabla V$.
If $X_n$ is the projection of $X$ to the span of $\nabla V$,we have that
$X_n$ is nowhere zero on $\cV$.

Consider the isotopy of vector field
\[ Y_t (x) = (1-t) X_n (x) + t X(x) , \; 0 \le t \le 1 . \]
We have that $Y_0 = X_n$ and $Y_1 = X$.

Now notice that this gives an isotopy for the corresponding Gauss maps as
well: this is because $Y_t (x) \ne 0$ on $\cV$ and for all $t$.
To see this, write $Y_t$ as
\[ Y_t = X_n + t ( X - X_n ) \]
and notice that the vector field $X- X_n$ is orthogonal to $X_n$, which is
everywhere nonzero.

Define the Gauss maps parametrized by $t$
\[ G_t : \cV^{n-1} \to S^{n-1} , \; x \mapsto \frac{Y_t (x)}{| Y_t (x) |}.\]
Since $Y_t (x)$ is everywhere nonzero, this is well defined and gives an
isotopy between
\[ G_0 = \frac{X_n}{| X_n |} = \frac{- \nabla V}{| - \nabla V |} = G_{-
  \nabla V} \]
and
\[ G_1 = \frac{X}{| X |} =  G_{X} . \]
\end{proof}

For reference purposes, let us denote the set of homotopy equivalence
classes of maps between two spaces $\Omega$ and $\Omega '$ by
\[ [ \Omega , \Omega ' ] \]
according to the standard notation.
Given a map $f: \Omega \to \Omega '$, we write $[f]$ for its equivalence
classs.
We thus have, in this notation, that
\[ [ G_X ] = [ G_{- \nabla V} ] , \text{in} \; [ \cV^{n-1} , S^{n-1} ] . \]

\paragraph{Relations to the index}

Since the spaces involved are of the same dimension and the target space is
a sphere, we have, by the Hopf theory, that these homotopy equivalence
classes are classified by degree.

\paragraph{Limit Cycles}

In the case of a limit cycle $\gamma$, we saw that the Gauss map always has
degree zero.
Additional necessary conditions are obtained by examining the Gauss map in
more detail.

\begin{theorem}\label{limc}
Suppose $\gamma$ is a limit cycle for the dynamics $X$ on $\Rn$.
then
\begin{enumerate}
\item For any $\ep >0$, there is a neighborhood $N_{\delta} ( \gamma )$ such
  that
\[ G_X ( N_{\delta} ( \gamma ) ) \subset N_{\ep} ( G_X ( \gamma )) . \]
\item The image $G_X ( \gamma )$ is not contained in any hemisphere: in
  other words, for any hyperplane $\cP \subset \Rn$, $G_X ( \gamma ) \cap
  \cP \ne \emptyset$.
Moreover, for generic $\cP$, $| G_X ( \gamma ) \cap \cP |$ is even (here the
bars denote cadinality of a finite set.)
\end{enumerate}
\end{theorem}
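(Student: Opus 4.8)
The plan is to prove the two parts separately, treating part (1) as a routine continuity statement and part (2) as the substantive one. For part (1), observe that the Gauss map $G_X$ is continuous on a neighborhood of $\gamma$ (since $X$ is nowhere zero there, $\gamma$ being a periodic orbit and in particular not an equilibrium set), and $\gamma$ is compact. Continuity of $G_X$ together with compactness of $\gamma$ gives uniform continuity on a compact tubular neighborhood; hence for any $\ep>0$ one can choose $\delta>0$ so that $G_X$ maps the $\delta$-tube $N_\delta(\gamma)$ into the $\ep$-neighborhood of its image on $\gamma$. I would spend one or two sentences on this and move on.

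For part (2), the key geometric input is that along $\gamma$ the vector field $X$ is everywhere tangent to $\gamma$, so $G_X|_\gamma$ is exactly the unit tangent indicatrix of the closed curve $\gamma$ traversed once. The claim `$G_X(\gamma)$ is not contained in any closed hemisphere' then amounts to: the tangent directions of a closed loop in $\R^n$ cannot all lie in a closed half-space $\{\,u : \langle u, w\rangle \ge 0\,\}$ for any fixed $w$. To see this, parametrize $\gamma$ by $\gamma:[0,T]\to\R^n$ with $\gamma(0)=\gamma(T)$ and $\dot\gamma(s)=X(\gamma(s))\ne 0$; if $G_X(\gamma)$ lay in that hemisphere we would have $\langle \dot\gamma(s), w\rangle \ge 0$ for all $s$, whence
\[
0 = \langle \gamma(T)-\gamma(0), w\rangle = \int_0^T \langle \dot\gamma(s), w\rangle\, ds \ge 0,
\]
forcing $\langle \dot\gamma(s), w\rangle \equiv 0$, i.e. $\gamma$ lies in an affine hyperplane orthogonal to $w$. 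Iterating this observation in the hyperplane (or simply choosing $w$ transverse to the affine hull of $\gamma$) shows $G_X(\gamma)$ meets every hyperplane through the origin, which is the first assertion of (2).

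For the parity statement, fix a generic hyperplane $\cP=\{\langle\cdot,w\rangle=0\}$ through the origin and consider the smooth function $f(s)=\langle \dot\gamma(s), w\rangle / |\dot\gamma(s)|$ on the circle $\R/T\Z$; the points of $G_X(\gamma)\cap\cP$ correspond to the zeros of $f$, equivalently the zeros of $s\mapsto\langle\dot\gamma(s),w\rangle$. For generic $w$ these zeros are nondegenerate (transverse), so at each of them $\langle\dot\gamma,w\rangle$ changes sign; since we are on a circle, the number of sign changes of a continuous function is even, giving $|G_X(\gamma)\cap\cP|$ even. The main obstacle — really the only place care is needed — is justifying the genericity: I would invoke Sard's theorem applied to the map $s\mapsto \langle \dot\gamma(s),w\rangle$ (or a parametric transversality argument over the sphere of normals $w$) to conclude that for $w$ outside a measure-zero set the zeros are simple, and simultaneously that $w$ is not orthogonal to the affine hull of $\gamma$ so that $G_X(\gamma)\cap\cP$ is nonempty and finite. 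Everything else is the elementary integral identity above.
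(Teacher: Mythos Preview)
Your proof is correct and follows essentially the same route as the paper: both reduce part~(2) to the observation that a fixed linear functional applied to $\dot\gamma$ cannot keep a constant sign along a closed orbit, and both dispatch the parity claim by an elementary transversality argument on the circle. The paper packages the first observation as the construction of a Lyapunov-type function $V=\tfrac{1}{2}x_1^2$ in coordinates adapted to the functional, while you use the equivalent integral identity $\int_0^T \langle \dot\gamma, w\rangle\,ds = 0$; your formulation is in fact the cleaner of the two.
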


\begin{proof}
The first part is proved by continuity and the long flow box
(see~\cite{pdem}.)

The second part is by contradiction: suppose there exists a hyperplane $\cP
_a = \{ v \in \Rn \; ; \; a(v) = 0 \}$, for some $a \in {( \Rn )}^*$ and is
such that $G_X ( \gamma ) \cap \cP _a = \emptyset$.
Since any hyperplane separates $S^{n-1}$ into two parts, we must have that
$a ( G_X ( x ) )$ is of uniform sign, say negative, for all $x \in \gamma$.

Choose a basis $b_1 , \ldots , b_n$ of $\Rn$ such that $a$ is the dual basis
vector of $b_1$, i.e. $a( b_1 ) = 1$ and $a ( b_i ) = 0$ for all $i \ne 1$.
Write $x_1 , \ldots , x_n$ for the coordinates in this basis.

\begin{claim}
The function $V(x) = \frac{1}{2} x_1^2$ is a Lyapunov function for $X$ in
some open neighhborhood of $\gamma$.
\end{claim}
This is shown by computing $\frac{dV}{dt}{|}_{\gamma}$.
We have
\[ \frac{dV}{dt} = ( x_1 , 0 , \ldots , 0 ) \cdot \dot \gamma \]
and, since $G_X = \frac{X}{|X|}$, this is just $a(X) < 0$.

The claim now establishes a contradiction that proves the theorem, since
$G_X ( \gamma )$ is a closed curve.
The last part also follows from this fact and an elementary transversality
argument.
\end{proof}

Theorem~\ref{limc} says roughly that, even though the image of the Gauss map
of a limt cycle is `thin,' still it must curve sufficiently in the target
sphere so as to intersect all possible hyperplanes.

As for the Lyapunov level sets near a limit cycle, we have

\begin{theorem}
Suppose $\gamma$ is a stable limit cycle for some controlled dynamics.
then, on each level set of a Lyapunov function near $\gamma$, each direction
(i.e. element of the unit sphere) appears at least twice, in other words,
for each $v \in S^{n-1}$,
\[ | G_{- \nabla V}^{-1} (v) | \ge 2 . \]
\end{theorem}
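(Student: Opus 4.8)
The plan is to reduce the assertion to the combination ``$\deg = 0$'' plus ``surjective'' for the map $G:=G_{-\nabla V|\cV}$, exploiting two structural facts. First, a regular level set $\cV$ of $V$ taken close enough to $\gamma$ is diffeomorphic to the boundary $\partial W$ of a small tubular neighborhood $W$ of $\gamma$ in $\Rn$, hence to $S^{1}\times S^{n-2}$. Second, on such a level set $-\nabla V$ is everywhere normal to $\cV$, so that $G$ is the normal Gauss map of the hypersurface $\cV$, the normal being chosen to point toward $\gamma$ (equivalently, into the interior of $W$, which is where $V$ decreases).

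First I would compute the degree. By the homotopy equivalence $[G_{X|\cV}]=[G_{-\nabla V|\cV}]$ established earlier in this section, the two maps have the same degree; and since $W$ is compact with nonempty interior, $\partial W=\cV$, the controlled dynamics is nowhere zero on $\cV$, and $W$ ---being a thin neighborhood of a limit cycle--- contains no equilibrium, part~(1) of Theorem~\ref{PH} gives $\deg G_{X|\cV}=0$, hence $\deg G=0$. (Equivalently, the normal Gauss map of $\partial W$ has degree $\chi(W)=0$, since $W$ deformation retracts to $\gamma\simeq S^{1}$.) Next I would establish surjectivity of $G$: fix $v\in S^{n-1}$ and let $x_{0}\in\cV$ minimize $x\mapsto\langle x,v\rangle$. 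Then $v\perp T_{x_{0}}\cV$, and the open ray $\{x_{0}-tv:t>0\}$ misses $\cV$ (by minimality) and is unbounded, so it lies in the exterior component of $\Rn\setminus\cV$; since $\cV$ has exactly two local sides at $x_{0}$, the side lying in the interior of $W$ is therefore the $+v$ side, and the unit normal of $\cV$ at $x_{0}$ pointing toward $\gamma$ equals $v$, i.e. $x_{0}\in G^{-1}(v)$. (For $n=2$, where $\cV$ is a disjoint union of two circles, the same extreme-point idea applied componentwise already yields two preimages.)

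It then remains to combine the two facts. By Sard's theorem almost every $v\in S^{n-1}$ is a regular value of $G$; for such $v$, $G^{-1}(v)$ is finite and, by the previous step, nonempty, while the local degrees at its points (each $\pm1$) sum to $\deg G=0$, which is impossible for a single point --- so $|G^{-1}(v)|\ge 2$. For an arbitrary $v$ one passes to the limit along regular values $v_{j}\to v$, using compactness of $\cV$ to extract two distinct limit preimages; should these collapse to one point, one instead exhibits a second preimage directly as the ``underside of the tube'' lying over a point of $\gamma$ at which $\langle\cdot,v\rangle|_{\gamma}$ is maximal, which is an honest critical point of $\langle\cdot,v\rangle|_{\cV}$ with inner normal exactly $v$, distinct from $x_{0}$ unless $\gamma$ lies in a hyperplane orthogonal to $v$ ---in which case $G^{-1}(v)$ already contains an entire $(n-2)$-sphere. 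Rewriting $G=G_{-\nabla V|\cV}$ gives the claim.

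I expect the main obstacle to be producing the second preimage of \emph{every} direction, not merely of generic ones: the degree-zero count disposes of almost all $v$ at once, so the real work is (i) the package of standard but nontrivial facts that the nearby level set genuinely is the tube $S^{1}\times S^{n-2}=\partial W$ with $W$ deformation-retracting onto $\gamma$, and that the degree survives the homotopy equivalence of the first step, and (ii) removing the genericity of $v$ in the last step --- for a fully self-contained treatment this means carrying out the ``two undersides of the tube'' construction carefully inside an honest (non-round) tubular neighborhood rather than leaning on a compactness-limit argument.
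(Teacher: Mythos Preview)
The paper does not actually prove this theorem: it says only ``(The proof is a basic topological fact about tori and is omitted.)'' So there is nothing to compare against line by line, and your proposal in fact supplies considerably more than the paper does. Your reduction is exactly in the spirit of the hint: a nearby Lyapunov level set is the boundary of a tubular neighbourhood of $\gamma$, hence (for $n\ge 3$) diffeomorphic to $S^{1}\times S^{n-2}$, and $-\nabla V/|\nabla V|$ is the inward unit normal, so $G_{-\nabla V|\cV}$ is the inward Gauss map of that hypersurface. Your two structural inputs are correct: the degree of this map vanishes (either by the homotopy $[G_{X|\cV}]=[G_{-\nabla V|\cV}]$ together with part~(1) of Theorem~\ref{PH}, or directly because $\chi$ of the solid tube is $\chi(S^{1})=0$), and the map is onto (your extreme-point argument is fine once one notes that a linear functional on the compact $W$ attains its minimum on $\partial W=\cV$, so the half-space below $x_{0}$ lies in the unbounded component). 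For regular $v$ the combination ``degree $0$'' $+$ ``nonempty finite fibre with local degrees $\pm1$'' forces $|G^{-1}(v)|\ge 2$, exactly as you say; your separate treatment of $n=2$ (two disjoint Jordan curves, each with Gauss map of degree $\pm1$, hence each surjective) is also correct.

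The only soft spot, which you correctly flag yourself, is the passage from regular to arbitrary $v$. The limiting argument can indeed collapse two preimages to one, and your fallback ``underside of the tube over the $\gamma$-maximum of $\langle\cdot,v\rangle$'' is morally right for a round tube but, as written, leans on the tube being metrically round: for a general Lyapunov level set the inward normal at that point is only approximately $v$, and when $\gamma$ lies in a hyperplane orthogonal to $v$ the fibre need not literally be an $(n-2)$-sphere. A cleaner way to close this is to note that if $G^{-1}(v)=\{x_{0}\}$ were a single point, then the local degree of $G$ at $x_{0}$ would equal the global degree $0$; choosing a small ball $U$ about $x_{0}$ with $G^{-1}(v)\subset U$, one sees that for regular $v'$ close to $v$ all preimages lie in $U$ and have signed count $0$, while the (mod~$2$) count of preimages of a regular value is a homotopy invariant equal to the mod~$2$ degree of $G|_{\partial U}$, which here vanishes; but your surjectivity argument applied to $v'$ gives a preimage at the global minimiser of $\langle\cdot,v'\rangle$ on $\cV$, which for $v'$ close to $v$ stays near $x_{0}$ only if $x_{0}$ is itself the global minimiser; running the same argument with the global \emph{maximiser} produces a preimage of $-v'$ far from $x_{0}$, and then the ``inner'' preimage of $v'$ (coming from the inner sheet of the tube near the $\gamma$-maximum of $\langle\cdot,v'\rangle$) is separated from $x_{0}$ as well. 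In short: your identification of the difficulty is accurate, and the paper's omission conceals precisely this bookkeeping; what you have written is already more than the paper offers, and with the refinement above becomes a complete argument.
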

(The proof is a basic topolgical facts about tori and is omitted.)
Thus, even though the Gauss map of the gradient vector field of Lyapunov
functions is of degree zero on any level (as it should be by the
Poincar\'{e}-Hopf theorem~\ref{PH}), it covers the unit sphere at least
twice.

We see, therefore, that members of the \emph{same} homotopy equivalence
class of maps can have widely different Gauss images.
The trick, as far as control is concerned, is to find a representative
arising from a control section (see~\cite{contbk}.)

\section{Summary}

We have presented ways of deriving collections of necessary conditions for
achieving dynamics of a given type and we also pointed out the limitations
of such topological conditions (due to the simplicity of the Hopf theory of
maps to a sphere.)
The basic aim of any analysis is, of course, to arrive at \emph{constructive} methodologies.
In the treatment of this subject in~\cite{contbk}, we find that
conditions that are both necessary and sufficient can be found for achieving
dynamics in a certain class.
In this light, the fundamental source of necessary conditions is the class
of control-transverse sections and the resulting feedback-invariant dynamics.

\bibliography{bibcontrol}
\bibliographystyle{plain}

\end{document}